\newcommand{\textcyr}[1]{%
 {\fontencoding{OT2}\fontfamily{wncyr}\fontseries{m}\fontshape{n}\selectfont #1}}
\def\easycyrsymbol#1{\mathord{\mathchoice
  {\mbox{\fontsize\tf@size\z@\usefont{T2A}{\rmdefault}{m}{n}#1}}
  {\mbox{\fontsize\tf@size\z@\usefont{T2A}{\rmdefault}{m}{n}#1}}
  {\mbox{\fontsize\sf@size\z@\usefont{T2A}{\rmdefault}{m}{n}#1}}
  {\mbox{\fontsize\ssf@size\z@\usefont{T2A}{\rmdefault}{m}{n}#1}}
}}
\newcommand{\Bcyr}{\easycyrsymbol{\CYRB}}
\newcommand{\Sha}{{\mbox{\textcyr{Sh}}}}
\newcommand{\defi}[1]{\textsf{#1}} 
\def\act#1#2%
\newcommand{\Z}{{\mathbb Z}}
\newcommand{\Q}{{\mathbb Q}}
\newcommand{\A}{{\mathbb A}}
\newcommand{\G}{{\mathbb G}}
\newcommand{\Abar}{{\overline{A}}}
\newcommand{\Ebar}{{\overline{E}}}
\newcommand{\Kbar}{{\overline{K}}}
\newcommand{\Xbar}{{\overline{X}}}
\newcommand{\calL}{{\mathcal L}}
\newcommand{\calM}{{\mathcal M}}
\newcommand{\calN}{{\mathcal N}}
\newcommand{\To}{\longrightarrow}
\DeclareMathOperator{\Mat}{Mat}
\DeclareMathOperator{\inv}{inv}
\DeclareMathOperator{\CH}{CH}
\DeclareMathOperator{\Hom}{Hom}
\DeclareMathOperator{\ord}{ord}
\DeclareMathOperator{\Gal}{Gal}
\DeclareMathOperator{\Cor}{Cor}
\DeclareMathOperator{\Br}{Br}
\DeclareMathOperator{\Pic}{Pic}
\DeclareMathOperator{\Alb}{Alb}
\DeclareMathOperator{\HH}{H}
\DeclareMathOperator{\NS}{NS}
\DeclareMathOperator{\Spec}{Spec}
\DeclareMathOperator{\GL}{GL}
\newtheorem{Theorem}{Theorem}[]
\newtheorem{Lemma}[Theorem]{Lemma}
\newtheorem{Proposition}[Theorem]{Proposition}
\newtheorem{Corollary}[Theorem]{Corollary}
\newtheorem{Remark}[Theorem]{Remark}
\numberwithin{equation}{section}
\begin{document}

\title{There are no transcendental Brauer-Manin obstructions on abelian varieties}

\author{Brendan Creutz}
\address{School of Mathematics and Statistics, University of Canterbury, Private Bag 4800, Christchurch 8140, New Zealand}
\email{brendan.creutz@canterbury.ac.nz}
\urladdr{http://www.math.canterbury.ac.nz/\~{}b.creutz}

\begin{abstract}
	Suppose $X$ is a torsor under an abelian variety $A$ over a number field. We show that any adelic point of $X$ that is orthogonal to the algebraic Brauer group of $X$ is orthogonal to the whole Brauer group of $X$. We also show that if there is a Brauer-Manin obstruction to the existence of rational points on $X$, then there is already an obstruction coming from the locally constant Brauer classes. These results had previously been established under the assumption that $A$ has finite Tate-Shafarevich group. Our results are unconditional.
\end{abstract}

\maketitle
%
%
%
\section{Introduction}
%

	Let $X$ be a smooth projective and geometrically integral variety over a number field $k$. In order that $X$ possesses a $k$-rational point it is necessary that $X$ has points everywhere locally, i.e., that the set $X(\A_k)$ of adelic points on $X$ is nonempty. The converse to this statement is called the \defi{Hasse principle}, and it is known that this can fail. When $X(k)$ is nonempty one can ask if \defi{weak approximation} holds, i.e., if $X(k)$ is dense in $X(\A_k)$ in the adelic topology. 

	Manin \cite{Manin} showed that the failure of the Hasse principle or weak approximation can, in many cases, be explained by a reciprocity law on $X(\A_k)$ imposed by the Brauer group, $\Br X := \HH^2_\text{\'et}(X,\G_m)$. Specifically, each element $\alpha \in \Br X$ determines a continuous map, $\alpha_* \colon X(\A_k) \to \Q/\Z$, between the adelic and discrete topologies with the property that the subset $X(k)\subset X(\A_k)$ of rational points is mapped to $0$. Thus, for any subset $G \subset \Br X$, the set $X(\A_k)^G$ of adelic points that are mapped to $0$ by every element of $G$ is a closed subset of $X(\A_k)$ containing $X(k)$. When this subset is empty (resp., proper) one says there is a \defi{Brauer-Manin obstruction} to the existence of rational points (resp., weak approximation).

	The Brauer group of $X$ admits a natural filtration,
	\begin{equation}\label{filtration}
		0 \subset \Br_0X \subset \Bcyr X \subset \Br_{1/2}X \subset \Br_1X \subset \Br X\,,
	\end{equation}
	where the nontrivial subgroups, whose definitions are recalled in Section~\ref{notation} below, consist of elements that are  (respectively, in increasing order) \defi{constant}, \defi{locally constant}, \defi{Albanese}, and \defi{algebraic}. Elements of $\Br X$ surviving the quotient $\Br X/\Br_1X$ are said to be \defi{transcendental}. This filtration leads to the sequence of containments,
	\[
		X(\A_k)^{\Br} \subset X(\A_k)^{\Br_1} \subset X(\A_k)^{\Br_{1/2}}  \subset X(\A_k)^{\Bcyr} \subset X(\A_k)^{\Br_0} = X(\A_k)\,.
	\]
	In general, it is possible for any of these containments to be proper. When $X(\A_k)^{\Br} \ne X(\A_k)^{\Br_1}$ one says there is a \defi{transcendental Brauer-Manin obstruction}. We show that this cannot occur for torsors under abelian varieties.
	\vfill

\begin{Theorem}\label{thm:mainthm}
	Let $X$ be a torsor under an abelian variety over a number field $k$. Then
	\[
		X(\A_k)^{\Br} = X(\A_k)^{\Br_{1}} = X(\A_k)^{\Br_{1/2}}\,.
	\]
	Moreover, if these sets are empty then so is $X(\A_k)^{\Bcyr}$.
\end{Theorem}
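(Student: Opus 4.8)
The plan is to reduce everything to an evaluation computation on the abelian variety $A$ itself and then invoke global reciprocity. Since all of the asserted sets are empty when $X(\A_k)=\emptyset$, I may assume $X(\A_k)\neq\emptyset$; fixing an adelic point identifies $X_{\Kbar}$ with $A_{\Kbar}$ and hence gives a Galois-equivariant isomorphism $\Br\Xbar\cong\Br\Abar$, under which the transcendental part of $\Br X$ maps into the invariants $(\Br\Abar)^{\Gal}$. The first task is therefore to understand $(\Br\Abar)^{\Gal}$ concretely. Here I would use the standard description of the cohomology of an abelian variety as the exterior algebra on $\HH^1(\Abar,-)$ together with the Kummer sequence: the latter realizes $\Br\Abar$ as the quotient of $\varinjlim_n\HH^2(\Abar,\mu_n)$ by the image of $\NS\Abar$, and under the former the transcendental classes correspond to alternating forms on the (full) Tate module $TA$ that do not arise from the symmetric homomorphisms $A\to A^\vee$ making up $\NS\Abar$.

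The heart of the argument is a local evaluation formula. For a place $v$ and a point $a\in A(k_v)$, I want to express $\operatorname{inv}_v\bigl(\alpha(a)\bigr)$, for $\alpha$ a transcendental class, purely in terms of the Kummer class $\delta(a)\in\HH^1(k_v,A[n])$ and the alternating form $\phi$ on $A[n]$ attached to the mod-$n$ reduction of $\alpha$. Concretely I expect an identity of cup-product type, of the shape
\[
  \operatorname{inv}_v\bigl(\alpha(a)\bigr)=\operatorname{inv}_v\bigl(\delta(a)\cup\phi_*\delta(a)\bigr),
\]
where the cup product is formed using the Weil pairing $A[n]\times A^\vee[n]\to\mu_n$ and $\phi_*\colon\HH^1(k_v,A[n])\to\HH^1(k_v,A^\vee[n])$. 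Establishing such an identity — relating the geometric class $\bar\alpha\in(\Br\Abar)^{\Gal}$, pulled back along a point, to a quadratic expression in Kummer classes — is the step I expect to be the main obstacle: it requires tracking $\bar\alpha$ through both the Hochschild–Serre and Kummer spectral sequences compatibly with evaluation at $a$, and the antisymmetry of $\phi$ is exactly what prevents the self-pairing from collapsing.

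Granting the local formula, I would then sum over all places. For an adelic point $(a_v)\in X(\A_k)^{\Br_{1/2}}$ the Kummer classes $\delta(a_v)$ are constrained by orthogonality to the Albanese classes, which under the same dictionary are precisely the pairings coming from $\HH^1(k,A^\vee)$ via the Weil pairing. The global sum $\sum_v\operatorname{inv}_v\bigl(\alpha(a_v)\bigr)$ is then a sum of local cup products, and the reciprocity law for $\Br k$ — equivalently the global Poitou–Tate pairing — forces it to vanish once the algebraic obstruction does. The decisive point, and the reason the argument should be unconditional, is that this uses only reciprocity together with orthogonality to $\Br_{1/2}$; at no stage does one need non-degeneracy of the Cassels–Tate pairing on $\Sha(A)$, which is where the finiteness hypothesis entered earlier treatments. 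The equality $X(\A_k)^{\Br_1}=X(\A_k)^{\Br_{1/2}}$ fits the same framework: one checks that the residual algebraic classes, namely those hitting the finitely generated lattice $\NS\Abar$, evaluate trivially on points already orthogonal to the Albanese classes.

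Finally, for the ``moreover'' assertion I would argue contrapositively, showing that $X(\A_k)^{\Bcyr}\neq\emptyset$ forces $X(\A_k)^{\Br}\neq\emptyset$. Via the same duality the locally constant classes $\Bcyr X$ detect whether $[X]\in\HH^1(k,A)$ pairs trivially, under the local pairings, with the everywhere-locally-trivial elements of $\HH^1(k,A^\vee)$; nonemptiness of $X(\A_k)^{\Bcyr}$ records that $[X]$ survives this pairing. Combined with the first part of the theorem, which collapses the full Brauer–Manin set onto the Albanese one, this should exhibit an adelic point orthogonal to all of $\Br X$. I expect this half, too, to rest only on reciprocity and the explicit evaluation formula, so that it avoids any finiteness input; making precise the passage from ``the locally constant obstruction vanishes'' to ``a genuine surviving adelic point exists,'' without assuming $\Sha(A)$ finite, is the most delicate point of this part of the argument.
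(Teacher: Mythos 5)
Your strategy---reduce everything to an explicit local evaluation formula of cup-product type and conclude by global reciprocity---is genuinely different from the paper's, but both halves of your argument rest on steps you have not carried out, and which you yourself flag as the expected obstacles. The central identity $\inv_v\bigl(\alpha(a)\bigr)=\inv_v\bigl(\delta(a)\cup\phi_*\delta(a)\bigr)$ is conjectured, not proved, and as stated it cannot be literally correct: the left-hand side depends on the choice of lift $\alpha\in\Br X$ of $\bar\alpha\in(\Br\Xbar)^{\Gal}$, which is ambiguous up to $\Br_1X$ (whose elements do \emph{not} evaluate trivially), whereas the right-hand side depends only on $\bar\alpha$. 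At best such a formula can hold up to an algebraic correction, and controlling that correction is exactly your other unproved claim---that classes of $\Br_1X$ with nontrivial image in $\HH^1(k,\NS\Xbar)$ evaluate trivially on points orthogonal to $\Br_{1/2}X$. That is not a routine check; it is where the paper does its hardest work: Lemma~\ref{Lemma1} needs $n^p$-coverings with $p=q+3$, the exponent $q$ existing precisely to control the interaction between $\HH^0(k,\NS\Abar)$ and $\HH^1(k,\Pic^0\Abar)$. Moreover, your framework is written for $X=A$ with an origin: for a torsor with no rational point there is no Kummer map $\delta$ on $X(k_v)$ at all; one must pass to an $n$-covering, and then the class $[X]$ enters the connecting homomorphisms (this is the paper's Lemma~\ref{lem:deltas}), which your first part ignores. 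Your reciprocity mechanism is sound in outline---orthogonality to $\Br_{1/2}X$ does imply, via descent/Poitou--Tate, that the local Kummer data come from a global object, which is what Proposition~\ref{prop:desc} extracts---but the paper then sidesteps any evaluation formula entirely: Lemma~\ref{Lemma1} shows that pullback along an $n^p$-covering kills the $n$-torsion of $\Br X/\Br_0X$, so the lifted adelic point is orthogonal to $\pi^*\alpha$ for trivial reasons, and functoriality of the Brauer--Manin pairing finishes Theorem~\ref{prop1}.

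The ``moreover'' assertion in your proposal is likewise only a plan. The statement needed is that $X(\A_k)^{\Bcyr}\neq\emptyset$ forces $X(\A_k)^{\Br_{1/2}}\neq\emptyset$ (Theorem~\ref{prop2}), and without finiteness of $\Sha$ this is precisely where earlier treatments broke down; you concede the passage is ``the most delicate point'' but supply no mechanism for it. The paper gives two: Theorem~\ref{thm:completelycaptures}, which uses compactness to reduce to a finitely generated $B\subset\Br_{1/2}X$, the compatibility of the Tate and Brauer--Manin pairings (Corollaries~\ref{cor:1} and~\ref{cor:2}) to make all evaluation maps affine-linear on $\prod_{v\in S}\Alb^0_X(k_v)/n$, and the elementary Lemma~\ref{lem:linearalgebra} to produce a single combination $\beta=\sum b_i\alpha_i$ that is a nonzero constant on $X(\A_k)$ and hence lies in $\Bcyr X$; or alternatively Proposition~\ref{prop:desc}, which runs through descent, Manin's theorem, and---in the case where $[X]$ is not divisible by $n$ in $\HH^1(k,A)$---a nontrivial input from \cite{CreutzBLMS}. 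Until you can prove your local evaluation formula (with its algebraic correction terms, and in the torsor setting) and make the locally-constant-to-surviving-point step precise, the proposal is an outline of a possibly viable alternative route, not a proof.
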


The first statement implies that there are no transcendental obstructions to weak approximation on abelian varieties. The second statement says that the locally constant (\textit{a fortiori} algebraic) Brauer classes on $X$ capture the Brauer-Manin obstruction to the existence of rational points. This solves a problem stated in the ``American Institute of Mathematics Brauer groups and obstruction problems problem list'' \cite[Problem 2.4]{AimPL}.

We hasten to note that there are abelian varieties $A$ for which $\Br_{1/2}A \ne \Br A$. Examples with $\Br_1 A \ne \Br A$ can be found in \cite{SZ12}, while examples with $\Br_{1/2}A \ne \Br_1 A$ are described in Remark~\ref{rem:Brne} below.

The conclusion of Theorem~\ref{thm:mainthm} was previously known under the assumption that the Tate-Shafarevich group of the Albanese variety of $X$ is finite. When $X(k) \ne \emptyset$ this is due to Wang \cite{Wang}. When $X(k) = \emptyset$, this follows from Manin's result \cite[Th\'eor\`eme 6]{Manin} relating the obstruction coming from $\Bcyr X$ to the Cassels-Tate pairing.

Theorem~\ref{thm:mainthm} follows immediately from the next two theorems which are proved in Section~\ref{sec:proofs}.

\begin{Theorem}\label{prop1}
	If $X$ is a torsor under an abelian variety over a number field $k$, then $X(\A_k)^{\Br} = X(\A_k)^{\Br_{1/2}}$.
\end{Theorem}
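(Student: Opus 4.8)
The plan is to fix a single class $\alpha \in \Br X$ and prove that its evaluation map $\alpha_\ast \colon X(\A_k) \to \Q/\Z$ is identically zero on $X(\A_k)^{\Br_{1/2}}$. We may assume this set is nonempty, for otherwise there is nothing to prove, and, after decomposing $\Br X/\Br_0 X$ into its primary components, that $\alpha$ has $\ell$-power order modulo $\Br_1 X$ for a fixed prime $\ell$. Choosing a base point $O \in X(\A_k)^{\Br_{1/2}}$ identifies $X(\A_k)$ with $A(\A_k)$ and, since the Albanese pairing is bi-additive, exhibits $X(\A_k)^{\Br_{1/2}}$ as the coset $O + G$ of a closed subgroup $G \subseteq A(\A_k)$. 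It then suffices to understand $\alpha_\ast$ along this coset.

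The engine of the argument is that over $\kbar$ one has $\Xbar \cong \Abar$, so both translations and the multiplication-by-$n$ maps act on the geometric Brauer group $\Br\Xbar$. Translations are algebraically equivalent to the identity and hence act trivially on $\HH^\ast(\Abar)$, so for every local point $a \in A(k_v)$ the difference $t_a^\ast\alpha - \alpha$ is algebraic, i.e. lies in $\Br_1$ of $A_{k_v}$; similarly $[n]^\ast$ acts as $n^i$ on $\HH^i(\Abar)$, whence $[n]^\ast\alpha - n^2\alpha \in \Br_1$. Through the functoriality $\langle f^\ast\alpha,\,P\rangle = \langle\alpha,\,f(P)\rangle$ of the evaluation pairing, these two facts assemble into functional equations for $\alpha_\ast$ on $G$: its restriction is quadratic, in that the second difference $B(P,Q)$ is bi-additive, and it satisfies the homogeneity $\alpha_\ast(O + nP) - \alpha_\ast(O) = n^2\bigl(\alpha_\ast(O + P) - \alpha_\ast(O)\bigr)$. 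Matching the genuinely global classes (translation by rational points, multiplication as a $k$-morphism) with the place-by-place local classes is already part of the work here.

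The heart of the proof is to show that the bi-additive form $B$ — represented on $A \times A$ by $\mu^\ast\alpha - \pr_1^\ast\alpha - \pr_2^\ast\alpha$, a class whose geometric part lies in the Künneth summand $\HH^1(\Abar)\otimes\HH^1(\Abar)$ — reduces to Albanese data. Fixing the second argument, this class restricts to $t_Q^\ast\alpha - \alpha$, which is algebraic, and as $Q$ varies its class should depend $\Z$-linearly on $Q$ through the homomorphism $A \to \hat A = \Pic^0\Xbar$ attached to $\alpha$; this would identify $B$ with the pairing induced by a genuine $k$-homomorphism $A \to \hat A$, hence with the Albanese pairing, which the orthogonality to $\Br_{1/2}$ defining $G$ renders trivial. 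The same mechanism should simultaneously yield $X(\A_k)^{\Br_{1/2}} = X(\A_k)^{\Br_1}$ en route: an algebraic class maps into $\HH^1(k,\Pic\Xbar)$, and its Néron–Severi part pairs with points only through $\NS\Xbar \hookrightarrow \Hom(\Abar,\hat\Abar)$, so that it too is absorbed by the Albanese pairing.

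Granting $B \equiv 0$ on $G$, the function $P \mapsto \alpha_\ast(O+P) - \alpha_\ast(O)$ is a homomorphism $G \to \Q/\Z$ which the homogeneity relation forces to be $2$-torsion, hence zero when $\ell$ is odd; so $\alpha_\ast$ is constant along the coset, and one finishes by showing this constant vanishes. I expect the main obstacles to lie exactly at the two points where the reduction to Albanese data could break down. First, proving that $\mu^\ast\alpha - \pr_1^\ast\alpha - \pr_2^\ast\alpha$ really is of Albanese type rather than a transcendental correspondence on $A \times A$, and doing so while passing between global classes and the purely local classes $t_{a_v}^\ast\alpha - \alpha$ with no finiteness assumption on $\Sha$. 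Second, the prime $\ell = 2$, where ``quadratic'' does not collapse to ``bilinear'' and where the residual homomorphism $G \to \tfrac12\Z/\Z$, together with the vanishing of the constant value of $\alpha_\ast$ on the coset, must be pinned down by a direct local computation interfacing with the locally constant classes $\Bcyr X$.
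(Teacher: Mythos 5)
Your strategy (quadraticity of evaluation maps plus a reciprocity argument for the resulting bilinear form) is genuinely different from the paper's proof, which proceeds by descent: Proposition~\ref{prop:desc} lifts a point of $X(\A_k)^{\Br_{1/2}}$ to an adelic point on an $n^p$-covering $\pi\colon Y \to X$, Lemma~\ref{Lemma1} shows that $\pi^*$ annihilates the $n$-torsion of $\Br X/\Br_0X$, and functoriality of the Brauer--Manin pairing finishes. However, your plan has a genuine gap at exactly the step you call its heart, and it is not merely a missing verification: the claim is false as stated. The geometric class of $\alpha$ lives in $(\Br\Xbar)[n] \simeq \Hom(\wedge^2 A[n],\mu_n)$; it is Galois-equivariant because $\alpha$ is defined over $k$, but it is only a pairing on $n$-torsion, and in general it does \emph{not} lift to a homomorphism of abelian varieties $A \to \hat{A}$ (over $k$, or even over $\kbar$). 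If it always did, the transcendental quotient $\Br X/\Br_1X$ would essentially vanish, contradicting the examples of Skorobogatov--Zarhin recalled in Remark~\ref{rem:Brne}. So the bi-additive form $B$ genuinely can be a ``transcendental correspondence,'' and orthogonality to $\Br_{1/2}$ cannot absorb it through the Albanese pairing as you propose.

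What orthogonality to $\Br_{1/2}$ actually buys is descent-theoretic (Proposition~\ref{prop:desc}): for every $n$, the local Kummer images $\kappa_v(P_v)$ of a point of your group $G$ are the localizations of a single global class in $\HH^1(k,A[n])$. It is the combination of this globalization, the Galois-equivariance of $\bar\alpha\colon \wedge^2A[n] \to \mu_n$, and the reciprocity law of class field theory (the sum of local invariants of a class in $\HH^2(k,\mu_n)$ vanishes) that can kill the cup-product expression of $B$ --- a global input your outline never invokes. Its absence also explains why your two flagged loose ends cannot be repaired by local computations: constancy of $\alpha_*$ on the coset $O+G$ does not imply the constant vanishes (classes in $\Bcyr X$ can have nonzero constant sum --- this is precisely the phenomenon exploited in Theorem~\ref{thm:completelycaptures}), and the case $\ell = 2$ needs the same global mechanism, not a refinement of the quadraticity bookkeeping. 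In short, once you replace ``lift $\bar\alpha$ to a $k$-homomorphism $A \to \hat{A}$'' by ``globalize the Kummer images of points of $G$ via descent and apply reciprocity,'' you are led back, in essence, to the paper's route through coverings.
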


	When $X = A$ is an abelian variety with finite Tate-Shafarevich group Theorem~\ref{prop1} follows from results of Wang \cite[Propositions 2 and 3]{Wang}. Her argument uses a result of Serre \cite{Serre} on the congruence subgroup problem to identify the profinite completion of $A(k)$ with the topological closure of $A(k)$ in the space $A(\A_k)_\bullet$ obtained from $A(\A_k)$ by replacing each factor $A(k_v)$ with its set of connected components. Descent theory together with finiteness of $\Sha(k,A)$ then imply that $\overline{A(k)} = A(\A_k)_\bullet^{\Br_{1/2}}$. As $A(\A_k)^{\Br}_\bullet$ is closed and contains the image of $A(k)$, this forces the equality in the statement of the theorem. Our proof of Theorem~\ref{prop1} is based on a recent result of the author and Viray in \cite{CV17}, and requires no assumption on $\Sha(k,A)$.

\begin{Theorem}\label{prop2}
	If $X$ is a torsor under an abelian variety over a number field $k$ such that $X(\A_k)^{\Bcyr} \ne \emptyset$, then $X(\A_k)^{\Br_{1/2}} \ne \emptyset$.
\end{Theorem}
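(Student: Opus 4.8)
The plan is to reduce the statement to Manin's computation of the Brauer--Manin pairing on the Albanese classes together with the localisation sequence for the dual abelian variety. Write $\widehat A$ for the dual abelian variety, so that $\Pic^0\Xbar\cong\widehat A$ as Galois modules; modulo $\Br_0X$ the Albanese classes $\Br_{1/2}X$ correspond to the image of $\HH^1(k,\widehat A)=\HH^1(k,\Pic^0\Xbar)$ in $\HH^1(k,\Pic\Xbar)\cong\Br_1X/\Br_0X$, while $\Bcyr X$ corresponds to the subgroup $\Sha(k,\widehat A)$. Since $X(\A_k)^{\Bcyr}\ne\emptyset$ forces $X(\A_k)\ne\emptyset$, we have $X(k_v)\ne\emptyset$ for every $v$, i.e.\ $[X]\in\Sha(k,A)$; fix once and for all a reference adelic point $(x^0_v)\in X(\A_k)$.

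For $\beta\in\HH^1(k,\widehat A)$ let $b_\beta\in\Br_{1/2}X$ be an associated class and set $\Phi_{(x_v)}(\beta)=\sum_v\inv_v b_\beta(x_v)$, which is a well-defined homomorphism $\HH^1(k,\widehat A)\to\Q/\Z$ (independent of the lift of $b_\beta$ by reciprocity). First I would record that, because the difference of two local points of $X$ lies in $A(k_v)$ and the evaluation of $b_\beta$ transforms under translation by the local Tate pairing $A(k_v)\times\HH^1(k_v,\widehat A)\to\Q/\Z$, the assignment $(x_v)\mapsto\Phi_{(x_v)}$ is a map of torsors lying over the homomorphism $\theta\colon A(\A_k)_\bullet\to\Hom(\HH^1(k,\widehat A),\Q/\Z)$, $\theta((a_v))(\beta)=\sum_v\langle a_v,\res_v\beta\rangle_v$. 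Since $X(\A_k)$ is a torsor under $A(\A_k)$ and $A(\A_k)\twoheadrightarrow A(\A_k)_\bullet$, the image of $X(\A_k)$ in $\Hom(\HH^1(k,\widehat A),\Q/\Z)$ is the coset $\Phi_{(x^0_v)}+\im(\theta)$; hence $X(\A_k)^{\Br_{1/2}}\ne\emptyset$ if and only if $\Phi_{(x^0_v)}\in\im(\theta)$. Restricting to $\beta\in\Sha(k,\widehat A)$, the class $b_\beta$ is locally constant, $\Phi_{(x_v)}(\beta)$ is independent of the adelic point, and Manin's theorem \cite{Manin} identifies it with the Cassels--Tate pairing $\langle[X],\beta\rangle$; so $X(\A_k)^{\Bcyr}\ne\emptyset$ if and only if $\Phi_{(x^0_v)}$ vanishes on $\Sha(k,\widehat A)$.

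These two conditions are matched by the Poitou--Tate sequence
\[
0\to\overline{A(k)}\to A(\A_k)_\bullet\xrightarrow{\ \theta\ }\Hom(\HH^1(k,\widehat A),\Q/\Z)\to\Hom(\Sha(k,\widehat A),\Q/\Z)\to0,
\]
which is obtained by applying $\Hom(-,\Q/\Z)$ to the localisation sequence $0\to\Sha(k,\widehat A)\to\HH^1(k,\widehat A)\to\prod'_v\HH^1(k_v,\widehat A)$ and using local Tate duality to identify $\bigl(\prod'_v\HH^1(k_v,\widehat A)\bigr)^\vee=A(\A_k)_\bullet$. Exactness at $\Hom(\HH^1(k,\widehat A),\Q/\Z)$ — which, by injectivity of $\Q/\Z$, is the purely formal statement that $\im(\theta)$ is exactly the group of homomorphisms vanishing on $\Sha(k,\widehat A)$ — gives $\Phi_{(x^0_v)}\in\im(\theta)$ if and only if $\Phi_{(x^0_v)}|_{\Sha(k,\widehat A)}=0$. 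Combining with the previous paragraph yields $X(\A_k)^{\Br_{1/2}}\ne\emptyset\iff X(\A_k)^{\Bcyr}\ne\emptyset$, which in particular proves the theorem. I emphasise that this uses only local duality and the formal exactness above; no finiteness of $\Sha$ enters, which is precisely what removes the hypothesis present in the arguments of Wang and Manin.

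The main work, and the step I expect to be most delicate, is the second paragraph: making precise, via the Poincar\'e biextension and local Tate duality, that evaluation of $b_\beta$ at a local point is governed by the local duality pairing, and that $(x_v)\mapsto\Phi_{(x_v)}$ is genuinely a torsor map over $\theta$ — in particular that the local ``constant'' discrepancies assemble into the global Cassels--Tate pairing on $\Sha(k,\widehat A)$, which is the content of Manin's Th\'eor\`eme~6. Two technical points require care. First is the behaviour at the archimedean places, which is the reason the correct dual object is $A(\A_k)_\bullet$ (connected components) rather than $A(\A_k)$. Second is the contribution of $\NS\Xbar$: one must check that classes $\beta$ whose image dies in $\HH^1(k,\Pic\Xbar)$ pair trivially, so that $\Phi_{(x_v)}$ descends through $\Br_{1/2}X$ and the condition ``vanishing on $\Sha(k,\widehat A)$'' really detects orthogonality to $\Bcyr X$. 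Both are handled by noting that if $b_\beta=0$ then every local evaluation vanishes, so that $\Phi_{(x_v)}$ is automatically trivial on the relevant kernels and the argument is unaffected.
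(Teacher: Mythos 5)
Your proof is correct, but it takes a genuinely different route from the paper, which proves Theorem~\ref{prop2} in two independent ways: via Theorem~\ref{thm:completelycaptures}, whose proof combines the compatibility of the evaluation and local Tate pairings (Corollaries~\ref{cor:2} and~\ref{cor:1}) with a Smith-normal-form argument (Lemma~\ref{lem:linearalgebra}) to exhibit an obstructing locally constant class inside the given subgroup $B$; or via Proposition~\ref{prop:desc}, which runs through descent theory, divisibility of $[X]$ in $\Sha(k,A)$, the Cassels--Tate pairing (including \cite{CreutzBLMS} in the non-divisible case) and Manin's Th\'eor\`eme~6. You instead dualize the localisation sequence $0 \to \Sha(k,\widehat{A}) \to \HH^1(k,\widehat{A}) \to \bigoplus_v \HH^1(k_v,\widehat{A})$: since $\Q/\Z$ is injective and local Tate duality identifies $\Hom\bigl(\bigoplus_v \HH^1(k_v,\widehat{A}),\Q/\Z\bigr)$ with $A(\A_k)_\bullet$, a character of $\HH^1(k,\widehat{A})$ lies in $\im(\theta)$ as soon as it kills $\Sha(k,\widehat{A})$; applied to $\Phi_{(x^0_v)}$, this lets you translate $(x^0_v)$ into a point orthogonal to all of $\Br_{1/2}X$. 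Two remarks on this comparison. First, your argument shares with the paper's first route its one substantive input: the translation formula $\inv_v b_\beta(\mu(a_v,x_v)) = \inv_v b_\beta(x_v) + \langle a_v,\res_v\beta\rangle_v$ is exactly Corollary~\ref{cor:2}, which the paper obtains from \cite{Kai} or \cite{CipKrash}; your appeal to the Poincar\'e biextension and local duality is this same nontrivial compatibility, so you are not avoiding it (as you correctly flag). Second, your route is leaner than either of the paper's: you need no linear algebra lemma, no $n$-coverings or descent, and in fact not even Manin's theorem or the Cassels--Tate pairing---for the one implication required, it suffices that $b_\beta$ is locally constant when $\beta \in \Sha(k,\widehat{A})$, so that $\Phi_{(x_v)}(\beta)$ is independent of $(x_v)$ and vanishes by hypothesis. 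The trade-off is that your argument proves only Theorem~\ref{prop2}: it neither locates the obstructing class inside a prescribed subgroup $B$ nor controls its order, so it does not yield Theorem~\ref{thm:completelycaptures} or Theorem~\ref{cor2}, and it does not produce the link to divisibility in $\Sha(k,A)$ given by Proposition~\ref{prop:desc}.

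One correction is needed, though it does not affect the proof. The four-term sequence you display should not be asserted to be exact at $A(\A_k)_\bullet$: the statement $\ker(\theta) = \overline{A(k)}$ is not formal and is not known unconditionally---it is essentially Wang's theorem \cite{Wang}, proved under finiteness of $\Sha$, and its unconditional validity is tied to the triviality of the divisible part of $\Sha(k,A)$, i.e.\ to the very question addressed in Theorem~\ref{cor1}. Since you use only exactness at $\Hom(\HH^1(k,\widehat{A}),\Q/\Z)$, which is the purely formal consequence of injectivity of $\Q/\Z$, your argument stands; but the display should claim exactness only at the last two terms. A related minor point: the localisation map lands in the direct sum $\bigoplus_v \HH^1(k_v,\widehat{A})$ (by Lang's theorem the restricted product with respect to unramified classes coincides with the direct sum), and it is the direct sum whose Pontryagin dual is the full product $A(\A_k)_\bullet$; this is what makes your identification of the dual correct.
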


This theorem follows from an argument using descent theory and \cite[Th\'eor\`eme 6]{Manin} and is likely to be known to the experts (see Proposition~\ref{prop:desc} for details). Theorem~\ref{prop2} is also an immediate consequence of the following result proved in Section~\ref{sec:proofs} by using compatibility of the Tate and Brauer-Manin pairings.

\begin{Theorem}\label{thm:completelycaptures}
	Suppose $X$ is a torsor under an abelian variety over a number field $k$ and $B \subset \Br_{1/2}X$ is a subgroup such that $X(\A_k)^B = \emptyset$. Then $B$ contains a locally constant class $\beta \in \Bcyr X$ such that $X(\A_k)^\beta = \emptyset$.
\end{Theorem}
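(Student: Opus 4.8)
The plan is to reinterpret the Brauer--Manin condition through Tate's local and global duality pairings and then extract the required locally constant class from Pontryagin duality, so that finiteness of $\Sha$ is never invoked. First I would dispose of the trivial case: if $X(\A_k)=\emptyset$ then $\beta=0\in\Bcyr X$ already satisfies $X(\A_k)^\beta=\emptyset$, so I may assume $X(\A_k)\neq\emptyset$ and fix a base point $(x_v^0)\in X(\A_k)$. Writing $A$ for the Albanese of $X$ and $A^\vee$ for its dual, I would use the injection $\Br_{1/2}X/\Br_0 X\hookrightarrow\HH^1(k,A^\vee)$ recalled in Section~\ref{notation}, under which a class is locally constant precisely when its image lies in $\Sha(k,A^\vee)$. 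Let $\mathcal B\subseteq\HH^1(k,A^\vee)$ be the image of $B$; then $\mathcal B\cap\Sha(k,A^\vee)$ is the image of $B\cap\Bcyr X$, and moreover any lift to $B$ of a class in $\mathcal B\cap\Sha(k,A^\vee)$ automatically lands in $\Bcyr X$. Because constant classes evaluate to $0$ on every adelic point by the reciprocity law, the homomorphism $\alpha\mapsto\sum_v\alpha_*(x_v^0)$ on $B$ factors through $\mathcal B$; call the resulting functional $\phi_0\in\Hom(\mathcal B,\Q/\Z)$.

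The arithmetic input is the compatibility of the Brauer--Manin and Tate pairings: translating the base point by $(P_v)\in A(\A_k)$ changes the evaluation of an Albanese class $\alpha\leftrightarrow c\in\mathcal B$ by the sum of local Tate pairings,
\[
\sum_v\alpha_*(x_v^0+P_v)=\phi_0(c)+\sum_v\langle P_v,\res_v c\rangle_v .
\]
Thus $X(\A_k)^B=\emptyset$ says precisely that for every $(P_v)$ the functional $c\mapsto\phi_0(c)+\sum_v\langle P_v,\res_v c\rangle_v$ is nonzero, i.e. that $\phi_0$ does not lie in the subgroup $T\subseteq\Hom(\mathcal B,\Q/\Z)$ which is the image of $A(\A_k)_\bullet$ under the global Tate pairing restricted to $\mathcal B$.

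The crux is to identify $T$ without assuming $\Sha$ finite, and here I would invoke only local duality together with Pontryagin duality. Since $A$ is proper, $A(\A_k)_\bullet=\prod_v A(k_v)_\bullet$ is compact, and local Tate duality gives perfect pairings $A(k_v)_\bullet\times\HH^1(k_v,A^\vee)\to\Q/\Z$; hence $\bigoplus_v\HH^1(k_v,A^\vee)$ is canonically the Pontryagin dual of $A(\A_k)_\bullet$. Localization embeds $\mathcal C:=\mathcal B/(\mathcal B\cap\Sha(k,A^\vee))$ into this direct sum, and the global pairing on $\mathcal B$ factors through $\mathcal C$ and the local pairings. Dualizing the inclusion $\mathcal C\hookrightarrow\bigoplus_v\HH^1(k_v,A^\vee)$ of discrete groups yields a surjection $A(\A_k)_\bullet\twoheadrightarrow\Hom(\mathcal C,\Q/\Z)$, so $T=\Hom(\mathcal C,\Q/\Z)$, viewed inside $\Hom(\mathcal B,\Q/\Z)$ as exactly the functionals killing $\mathcal B\cap\Sha(k,A^\vee)$.

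Finally I would combine the two halves: the condition $\phi_0\notin T$ now reads $\phi_0|_{\mathcal B\cap\Sha(k,A^\vee)}\neq 0$, so there is $c\in\mathcal B\cap\Sha(k,A^\vee)$ with $\phi_0(c)\neq0$; lifting $c$ gives $\beta\in B\cap\Bcyr X$. As $c\in\Sha(k,A^\vee)$, all local classes $\res_v c$ vanish, so the displayed formula shows that $\beta_*$ is the constant $\phi_0(c)\neq0$ on all of $X(\A_k)$, whence $X(\A_k)^\beta=\emptyset$, as required. The step I expect to carry the real weight is the identification of $T$: it is exactly the place where earlier arguments used finiteness of $\Sha$ to make the global Cassels--Tate pairing nondegenerate, whereas replacing the global pairing by the unconditionally perfect local pairings on the compact group $A(\A_k)_\bullet$ removes that hypothesis.
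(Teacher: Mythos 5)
Your argument is sound in its essentials, and its key mechanism is genuinely different from the paper's. Both proofs run on the same arithmetic input --- the compatibility of the evaluation and Tate pairings (diagram~\eqref{eq:pairings} and Corollary~\ref{cor:2}), which is exactly your translation formula --- but they extract the obstructing class differently. The paper reduces by compactness to a finitely generated $B$ killed by some $n$, rewrites $X(\A_k)^B=\emptyset$ as the insolubility of a finite system of $\Z/n\Z$-linear equations on $\prod_{v\in S}\Alb^0_X(k_v)/n$, and produces $\beta$ via a Smith-normal-form argument (Lemma~\ref{lem:linearalgebra}), finally invoking Corollary~\ref{cor:1} to see that $\beta$ is locally constant. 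You instead dualize adelically: local Tate duality (with the archimedean modification) identifies $\bigoplus_v\HH^1(k_v,\Pic^0\Xbar)$ with the Pontryagin dual of the compact group $A(\A_k)_\bullet$, so the group $T$ of achievable functionals is exactly the annihilator of $\mathcal{B}\cap\Sha(k,\hat{A})$, and the obstruction must already be visible on $\mathcal{B}\cap\Sha(k,\hat{A})$. Your appeal to divisibility of $\Q/\Z$ (every character of a subgroup extends) is the infinite-group avatar of Lemma~\ref{lem:linearalgebra}; what it buys is that no finite-generation reduction is needed, and that the obstructing class is exhibited directly as one coming from $\Sha(k,\hat{A})$, making the link to the Cassels--Tate pairing (Proposition~\ref{prop:desc}, Theorem~\ref{cor2}) transparent.

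One step of your setup is, however, false as stated, though repairable with tools you already invoke. There is no injection $\Br_{1/2}X/\Br_0X\hookrightarrow\HH^1(k,\Pic^0\Xbar)$ in general: Section~\ref{notation} provides the injection $r$ of $\Br_{1/2}X/\Br_0X$ onto $i_*(\HH^1(k,\Pic^0\Xbar))$ inside $\HH^1(k,\Pic\Xbar)$, and $i_*$ can have nontrivial kernel; indeed the second Remark of Section~\ref{notation} notes that for a genus one curve $\ker(i_*)$ is generated by the class $[X]$ itself. So ``the image $\mathcal{B}$ of $B$'' is not defined, and ``its image lies in $\Sha(k,\hat{A})$'' is ambiguous. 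The repair: set $\mathcal{B}:=i_*^{-1}(r(B))\subset\HH^1(k,\Pic^0\Xbar)$. Your functional $\phi_0$ descends to $r(B)$ (constant classes sum to zero by reciprocity) and hence pulls back to $\mathcal{B}$; the translation formula, the localization map, and the computation of $T$ use only the local restrictions $\res_v c$ of classes $c\in\mathcal{B}$; and since you have already reduced to the case $X(\A_k)\ne\emptyset$, Corollary~\ref{cor:1} shows that $i_*$ is injective over every completion, so these restrictions are independent of the choice of lift and $\ker(i_*)\subset\Sha(k,\hat{A})$. Finally, if $c\in\mathcal{B}\cap\Sha(k,\hat{A})$ and $\alpha\in B$ satisfies $r(\alpha)=i_*(c)$, then for every $v$ one has $r(\res_v\alpha)=i_*(\res_v c)=0$, hence $\res_v\alpha\in\Br_0X_{k_v}$ and $\alpha\in\Bcyr X$, as you claim. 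With this modification your proof is complete.
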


By \cite[Th\'eor\`eme 6]{Manin}, finiteness of Tate-Shafarevich groups implies that the Brauer-Manin obstruction explains all failures of the Hasse principle for torsors under abelian varieties. Using Theorem~\ref{thm:mainthm} we deduce the converse.

\begin{Theorem}\label{cor1}
	The Brauer-Manin obstruction is the only obstruction to the existence of rational points on torsors under abelian varieties over number fields if and only if the maximal divisible subgroup of the Tate-Shafarevich group of every abelian variety over a number field is trivial.
\end{Theorem}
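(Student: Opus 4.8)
The plan is to deduce both implications from Manin's Th\'eor\`eme~6 \cite{Manin}, which identifies the obstruction coming from $\Bcyr X$ with the Cassels--Tate pairing, combined with Theorem~\ref{thm:mainthm}. Write $A^\vee$ for the dual abelian variety and recall that the Cassels--Tate pairing
\[
	\Sha(k,A) \times \Sha(k,A^\vee) \To \Q/\Z
\]
induces a perfect pairing on the quotients by the maximal divisible subgroups; in particular its left kernel is exactly $\Sha(k,A)_{\mathrm{div}}$. A torsor $X$ under $A$ satisfies $X(\A_k) \ne \emptyset$ if and only if its class $[X]$ lies in $\Sha(k,A)$, and $X(k) \ne \emptyset$ if and only if $[X] = 0$. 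Manin's result then reads: for $[X] \in \Sha(k,A)$, one has $X(\A_k)^{\Bcyr} \ne \emptyset$ if and only if $[X]$ lies in the left kernel of the pairing, i.e.\ if and only if $[X] \in \Sha(k,A)_{\mathrm{div}}$.

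For the direction assuming all divisible subgroups vanish, I would take a torsor $X$ with $X(\A_k)^{\Br} \ne \emptyset$. Then $X(\A_k) \ne \emptyset$, so $[X] \in \Sha(k,A)$, and the filtration \eqref{filtration} gives $X(\A_k)^{\Bcyr} \supseteq X(\A_k)^{\Br} \ne \emptyset$. By Manin's theorem $[X] \in \Sha(k,A)_{\mathrm{div}} = 0$, hence $X(k) \ne \emptyset$, and so the Brauer--Manin obstruction is the only one. This direction uses only the formal containments in the filtration together with Manin's result, not the new content of the paper.

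For the converse I would argue by contraposition: suppose some abelian variety $A$ over some number field $k$ has $\Sha(k,A)_{\mathrm{div}} \ne 0$, and choose a nonzero class $[X] \in \Sha(k,A)_{\mathrm{div}}$, represented by a torsor $X$. Since $[X] \in \Sha(k,A)$ we have $X(\A_k) \ne \emptyset$, and since $[X]$ lies in the left kernel, Manin's theorem gives $X(\A_k)^{\Bcyr} \ne \emptyset$. This is the crucial step: Theorem~\ref{prop2} upgrades this to $X(\A_k)^{\Br_{1/2}} \ne \emptyset$, and Theorem~\ref{prop1} (equivalently Theorem~\ref{thm:mainthm}) then gives $X(\A_k)^{\Br} = X(\A_k)^{\Br_{1/2}} \ne \emptyset$. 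If the Brauer--Manin obstruction were the only obstruction, this would force $X(k) \ne \emptyset$, i.e.\ $[X] = 0$, contradicting the choice of $[X]$. Hence $\Sha(k,A)_{\mathrm{div}} = 0$ for every $A$ and every $k$.

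The main obstacle is not any single computation but the careful bookkeeping around Manin's theorem: one must confirm that the locally constant obstruction on $X$ is governed by pairing $[X]$ against all of $\Sha(k,A^\vee)$, and that the resulting kernel is precisely the maximal divisible subgroup. Quantifying over all abelian varieties makes the appearance of the dual harmless, since $A = (A^\vee)^\vee$. It is worth emphasizing that the forward implication is essentially classical, whereas the converse is exactly where Theorem~\ref{thm:mainthm} is indispensable: without knowing that $X(\A_k)^{\Bcyr} \ne \emptyset$ propagates all the way up to $X(\A_k)^{\Br} \ne \emptyset$, a nonzero divisible class could fail to produce a counterexample, and the equivalence would remain conditional.
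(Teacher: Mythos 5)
Your proposal is correct and takes essentially the same route as the paper's proof: Theorem~\ref{thm:mainthm} (equivalently Theorems~\ref{prop1} and~\ref{prop2}) to pass between the full Brauer--Manin obstruction and the $\Bcyr$ obstruction, Manin's Th\'eor\`eme~6 to translate the latter into the Cassels--Tate pairing, and the identification of the pairing's kernel with the maximal divisible subgroup of $\Sha(k,A)$; your presentation merely unpacks the paper's chain of equivalences into the two implications, usefully isolating which direction needs the new results. The one detail the paper spells out that you elide is that Milne's theorem gives the kernel as the subgroup of \emph{divisible elements}, which coincides with the maximal divisible subgroup only because $\Sha(k,A)$ is torsion with finite $n$-torsion for every $n$.
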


	After reading a draft of this paper, Olivier Wittenberg noted that Theorem~\ref{cor1} is stated without proof (and in a special case) in a recent paper of Harpaz \cite[page 3, line 5]{Harpaz}. In private communication with Wittenberg which predated the present work, Harpaz outlined a proof of Theorem~\ref{prop2} similar to the argument in Proposition~\ref{prop:desc} below and noted that Theorem~\ref{cor1} then follows from results in his joint work with Schlank concerning \'etale homotopy types~\cite[Corollary 1.2 or Theorem 12.1]{HarpazSchlank}. In fact, this approach can also be used to give a proof of Theorem~\ref{prop1} and, hence, Theorem~\ref{thm:mainthm}. Our approach avoids the heavy machinery of \'etale homotopy types and yields (in addition to Theorem~\ref{thm:completelycaptures}) the following more explicit result in the case $X(\A_k)^{\Br} = \emptyset$.
	
	\begin{Theorem}\label{cor2}
		Let $A$ be an abelian variety over a number field $k$. There exists and integer $p \ge 1$ such that if $X$ is a locally soluble torsor under $A$ and $B \subset \Br X[n]$ is a subgroup such that $X(\A_k)^{B}=\emptyset$. Then the class of $X$ in $\Sha(k,A)$ is not divisible by $n^p$. In particular, there is an element $\beta \in \Bcyr X[n^p]$ such that $X(\A_k)^\beta = \emptyset$ and some $Y$ in $\Sha(k,\hat{A})[n^p]$ pairing nontrivially with $X$ under the Cassels-Tate pairing on the Tate-Shafarevich groups of the Albanese and Picard varieties, $A = \Alb^0_X$ and $\hat{A} = \Pic^0_X$.
\end{Theorem}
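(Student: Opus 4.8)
The plan is to isolate a single quantitative inclusion, deduce the theorem from it formally, and then concentrate all the work there. Write $\langle\,,\,\rangle$ for the Cassels--Tate pairing on $\Sha(k,A)\times\Sha(k,\hat A)$, where $\hat A=\Pic^0_X$. The key claim is that there is an integer $p=p(A)\ge 1$, depending only on $A$, so that for every locally soluble torsor $X$ under $A$ and every $n\ge 1$,
\begin{equation}
 X(\A_k)^{\Br_{1/2}X[n^p]}\subseteq X(\A_k)^{\Br X[n]}\,.\tag{$\star$}
\end{equation}
In words: an adelic point orthogonal to all Albanese classes of order dividing $n^p$ is automatically orthogonal to every Brauer class of order dividing $n$. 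This is the torsion-refined form of Theorem~\ref{prop1}, and I expect it to carry essentially the whole proof.

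Granting $(\star)$, the assertions are formal. Given $B\subseteq\Br X[n]$ with $X(\A_k)^B=\emptyset$, enlarging $B$ to $\Br X[n]$ gives $X(\A_k)^{\Br X[n]}=\emptyset$, so $(\star)$ yields $X(\A_k)^{\Br_{1/2}X[n^p]}=\emptyset$. Applying Theorem~\ref{thm:completelycaptures} to the subgroup $\Br_{1/2}X[n^p]\subseteq\Br_{1/2}X$ produces a locally constant class $\beta$ lying in $\Br_{1/2}X[n^p]$, hence $\beta\in\Bcyr X[n^p]$, with $X(\A_k)^\beta=\emptyset$. The homomorphism $\Bcyr X\to\Sha(k,\hat A)$ underlying Manin's Th\'eor\`eme 6 \cite{Manin} sends $\beta$ to a class $Y$ with $n^pY=0$, and that theorem identifies the constant value $\sum_v\inv_v\beta(P_v)$ with $\langle[X],Y\rangle$; since $X(\A_k)^\beta=\emptyset$ this value is nonzero, so $Y\in\Sha(k,\hat A)[n^p]$ pairs nontrivially with $[X]$. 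Finally, were $[X]=n^pW$ divisible by $n^p$ in $\Sha(k,A)$, bilinearity would force $\langle[X],Y\rangle=\langle W,n^pY\rangle=0$, a contradiction; thus $[X]$ is not divisible by $n^p$.

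To prove $(\star)$ I would pass through the compatibility of the Brauer--Manin and Tate pairings already exploited in Theorem~\ref{thm:completelycaptures}. Fixing a local point at each place trivializes $X$ locally and identifies $X(\A_k)$ with $A(\A_k)$; under the parametrization of Albanese classes by $\HH^1(k,\hat A)$ the pairing against such a class becomes the global Tate pairing $A(\A_k)\times\HH^1(k,\hat A)\to\Q/\Z$, up to a constant depending only on the class. The non-Albanese classes are constrained by the fact that, since $\Xbar\cong\Abar$ as Galois varieties, the quotient $\Br X/\Br_{1/2}X$ is assembled from $\HH^1(k,\NS\Abar)$ and the geometric transcendental group $(\Br\Abar)^{\Gal}$, both finite and depending only on $A$. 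The strategy is then to use \cite{CV17} to compare, place by place, the evaluation $P_v\mapsto\inv_v\alpha(P_v)$ of an arbitrary $\alpha\in\Br X[n]$ with that of an Albanese class, and to assemble these comparisons into a global Albanese class of order dividing $n^p$ defining the same obstruction, the exponent $p$ absorbing the torsor-independent exponents above together with the multiplication-by-$n$ structure on the Galois module $A[n^\infty]$.

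The main obstacle is precisely this comparison: establishing $(\star)$ with an exponent $p$ uniform both in $n$ and across all torsors $X$ under $A$. Uniformity in $X$ is plausible because every geometric invariant entering the argument is pulled back from $\Abar$, but the delicate points are globalizing the place-by-place comparison so that the resulting class is genuinely defined over $k$ and Albanese, and keeping its order bounded by $n^p$ independently of $n$; this is where the quantitative input of \cite{CV17} is indispensable. Once $(\star)$ is secured, the preceding paragraph shows that the remaining conclusions follow at once from Theorem~\ref{thm:completelycaptures}, Manin's Th\'eor\`eme 6, and bilinearity of the Cassels--Tate pairing.
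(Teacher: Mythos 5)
Your overall scaffolding is sound: reducing the theorem to the inclusion $(\star)$ and then deducing the three conclusions is a legitimate plan, and your deduction essentially reproduces what the paper packages as Proposition~\ref{prop:desc}. One caveat in that deduction: the assignment $\beta\mapsto Y$ ``underlying Manin's theorem'' is only well defined modulo $\ker(i_*)$, so from $n^p\beta=0$ you may only conclude $i_*(n^pY)=0$, not $n^pY=0$. The paper sidesteps this by running the implications in the other order: it first proves $[X]$ is not divisible by $n^p$ and then produces an $n^p$-torsion class $Y$ pairing nontrivially, citing \cite[Lemma I.6.17]{MilneADT} and \cite[Theorem 4]{CreutzBLMS}, rather than reading off the order of $Y$ from $\beta$.

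The genuine gap is that $(\star)$ --- which, as you say yourself, carries essentially the whole proof --- is never established; what you offer is a strategy, and not the one that works. Your plan is to compare, place by place, the evaluation maps of an arbitrary $\alpha\in\Br X[n]$ with those of Albanese classes and to assemble these comparisons into a single global Albanese class of order dividing $n^p$ ``defining the same obstruction.'' That is not what \cite{CV17} provides, and it faces an intrinsic difficulty: there is no reason the local evaluation maps of an individual transcendental class should agree with those of any Albanese class, and the proposed globalization is precisely the sort of local-to-global statement that would itself need proof. The mechanism that actually proves the theorem is different in kind: one does not convert $\alpha$ into an Albanese class, one \emph{kills} $\alpha$ by pulling it back along a covering. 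Lemma~\ref{Lemma1} (the paper's variant of \cite[Lemma 4.6]{CV17}) produces $p=p(A)$ such that for any $n^p$-covering $\pi\colon Y\to X$ the map $\pi^*\colon \Br X/\Br_0X\to \Br Y/\Br_0Y$ annihilates the $n$-torsion. Granting this, the proof of the present theorem is three lines, run in the contrapositive: if some $n^p$-covering $Y$ had an adelic point $y$, then for every $\alpha\in B$ one would have $\langle \pi(y),\alpha\rangle_\textup{BM}=\langle y,\pi^*\alpha\rangle_\textup{BM}=0$, since $\pi^*\alpha$ is constant and constant classes pair trivially with adelic points by reciprocity; this contradicts $X(\A_k)^B=\emptyset$. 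Hence no adelic point of $X$ lifts to an $n^p$-covering, and Proposition~\ref{prop:desc}, applied with $n^p$ in place of $n$, yields at once the non-divisibility of $[X]$ by $n^p$, the class $\beta\in\Bcyr X[n^p]$, and the element $Y\in\Sha(k,\hat{A})[n^p]$ pairing nontrivially with $X$. So the missing ingredient is the covering/annihilation lemma, not a place-by-place comparison of evaluations; without it, your argument does not get off the ground.
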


\section{The filtration on $\Br X$}\label{notation}
	Here are the definitions of the subgroups appearing in the filtration~\eqref{filtration}. Other than $\Bcyr X$, all groups may be defined for a smooth projective integral variety $X$ over an arbitrary field $K$. Let $\Xbar$ denote the base change of $X$ to a separable closure of $K$. The \defi{algebraic} Brauer group, $\Br_1X$, is defined to be the kernel of the base change map $\Br X \to \Br \Xbar$. The group of \defi{constant} Brauer classes, $\Br_0X$, is defined to be the image of $\Br K := \Br \Spec(K)$ under the map induced by the structure morphism $X \to \Spec K$. The Hochschild-Serre spectral sequence gives rise to a well-known exact sequence (cf. \cite[(2.23) on p. 30]{Skorobogatov}),
	\begin{equation}\label{eq:definer}
		0 \to \Pic X \to \HH^0(K,\Pic \Xbar) \to \Br K \to \Br_1 X \stackrel{r}\to \HH^1(K,\Pic\Xbar) \to \HH^3(K,\G_m)\,.
	\end{equation}
There is an exact sequence
	\begin{equation}\label{eq:NSX}
		0 \to \Pic^0\Xbar \to \Pic \Xbar \to \NS\Xbar \to 0\,,
	\end{equation}
	where $\NS\Xbar$ is the N\'eron-Severi group of $\Xbar$. The inclusion $i\colon\Pic^0\Xbar \subset \Pic\Xbar$ induces a map $i_*\colon\HH^1(K,\Pic^0\Xbar) \to \HH^1(K,\Pic\Xbar)$. The group of \defi{Albanese} Brauer classes, $\Br_{1/2}X$, is defined to be $r^{-1}(i_*(\HH^1(K,\Pic^0\Xbar)))$, where $r$ is as in~\eqref{eq:definer}. Our terminology here comes from the fact that $\HH^1(K,\Pic^0\Xbar)$ parametrizes $K$-torsors under the dual of the Albanese variety of $X$. The notation $\Br_{1/2}$ was first introduced in~\cite{Stoll}.
	
	Now suppose $K = k$ is a number field. For a prime $v$ of $k$, let $X_{k_v}$ denote the base change of $X$ to the completion $k_v$ of $k$. We define the group of \defi{locally constant} Brauer classes, $\Bcyr X$, to be the subgroup of $\Br_{1/2}X$ consisting of those elements which map, under base change to $k_v$, into the subgroup $\Br_0X_{k_v} \subset \Br_{1/2}X_{k_v}$, for every prime $v$ of $k$.

\begin{Remark}
	We have defined $\Bcyr X$ as a subgroup of $\Br_{1/2}X$. A priori, this may be smaller than the group $\Bcyr X$ defined in \cite[p. 97]{Skorobogatov}, which is the subgroup of $\Br_1X$ mapping into $\Br_0X_{k_v}$ at all primes. One could also consider the subgroup of locally constant classes in $\Br X$. In general one does not expect these groups to be equal. Theorems~\ref{thm:mainthm} and~\ref{prop2} are strongest using our definition of $\Bcyr X$.
\end{Remark}

\begin{Remark}
	The group $B$ featuring in \cite[Th\'eor\`eme 6]{Manin} is defined on p. 405 of op. cit. to be $B :=  r^{-1}(i_*(\Sha(k,\Pic^0\Xbar)))$. A priori this is a subgroup of $\Bcyr X$. Corollary~\ref{cor:1} below shows that for $X$ a torsor under an abelian variety the map $i_*$ is injective over every completion $k_v$ of $k$ for which $X$ has $k_v$-points. From this we deduce that $B = \Bcyr X$ when $X$ is locally soluble. However, as we now show, this does not hold in general. For $X$ a genus $1$ curve, the kernel of $i_*$ is generated by the class of $X$ in $\HH^1(k,\Pic^0\Xbar) \simeq \HH^1(k,A)$. Suppose $X$ is not locally soluble at at least two places. If $A(k)$ and $\Sha(k,A)$ are both trivial, then there is an isomorphism $\HH^1(k,A) \simeq \bigoplus \HH^1(k_v,A)$ (See \cite[page 83]{MilneADT}). Hence there is some $Y \in \HH^1(k,A)$ that is locally soluble at all but one place $w$, where it is isomorphic to $X_{k_w}$. The image of $Y$ is a nontrivial element in $r(\Bcyr X)$, but $r(B) = 0$.
\end{Remark}	

\begin{Remark}\label{rem:Brne}
	For $k$ a number field the map $r \colon \Br_1 X \to \HH^1(k,\Pic\Xbar)$ is surjective, since $\HH^3(k,\G_m) = 0$ by \cite[Corollary I.4.21]{MilneADT}. Also, $\HH^2(k,\Pic^0\Xbar)$ is annihilated by $2$, since it is isomorphic to $\bigoplus_{v \text{ real}}\HH^2(k_v,\Pic^0\Xbar)$ by \cite[I.6.26(c)]{MilneADT}. It follows that any nontrivial element of odd order in $\HH^1(k,\NS\Xbar)$ is the image under $\Br_1 X \to \HH^1(k,\Pic\Xbar) \to \HH^1(k,\NS\Xbar)$ of a class that is not contained in $\Br_{1/2}X$. If $X = E \times E'$ is a product of elliptic curves, then $\HH^1(k,\NS\Xbar) \simeq \HH^1(k,\Hom(\Ebar,\Ebar'))$, yielding many examples of abelian varieties $X$ for which $\Br_1X \ne \Br_{1/2}X$.
\end{Remark}

\section{Five lemmas}

In this section $A$ is an abelian variety over a field $K$ of characteristic $0$ with algebraic closure $\Kbar$ and $X$ is an $A$-torsor. We use $\Xbar$ and $\Abar$ to denote the base change to $\Kbar$. The torsor structure is given by a morphism $\mu\colon A\times X \to X$. Any $a \in A(K)$ gives rise to translation maps $t_a \colon A \to A$ and $\tau_a \colon X \to X$ determined by $t_a(b) = a + b$ and $\tau_a(x) = \mu(a,x)$. For $x \in X(\Kbar)$ the torsor action gives an isomorphism $\tau_x \colon \Abar \to \Xbar$ defined by $\tau_x(a) = \tau_a(x) = \mu(a,x)$.

	There is a homomorphism $\HH^0(K,\NS\Abar) \to \Hom_{k}(A(\Kbar),\Pic^0\Abar)$ sending $\lambda \in \HH^0(K,\NS\Abar)$ to the morphism of Galois modules $\phi_\lambda\colon A(\Kbar) \to \Pic^0\Abar$ defined by $\phi_\lambda(a) = t_a^*\calN\otimes \calN^{-1}$, where $\calN \in \Pic\Abar$ is any lift of $\lambda$ under the surjective map in~\eqref{eq:NSX} (with $A$ in place of $X$). This may be proved using the theorem of the square \cite[page  59, Corollary 4]{MumfordBook}. We define $\phi_\calN = \phi_\lambda$.

\begin{Lemma}\label{Lemma2}
	For any $x \in X(\Kbar)$ the pullback map $\tau_x^*\colon\Pic\Xbar \to \Pic\Abar$ is an isomorphism of abelian groups which induces isomorphisms of Galois modules $\tau_x^*\colon\Pic^0\Xbar \to \Pic^0\Abar$ and $\tau_x^*\colon \NS\Xbar \to \NS\Abar$.
\end{Lemma}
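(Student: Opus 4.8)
The plan is to separate the elementary part of the statement from its genuine content. Since $\tau_x\colon\Abar\to\Xbar$ is an isomorphism of $\Kbar$-varieties, the pullback $\tau_x^*\colon\Pic\Xbar\to\Pic\Abar$ is automatically an isomorphism of abelian groups, and, because an isomorphism of varieties identifies the identity components of the Picard schemes, it carries $\Pic^0\Xbar$ onto $\Pic^0\Abar$ and therefore descends to an isomorphism $\NS\Xbar\to\NS\Abar$. The only substantive assertion is that the induced maps on $\Pic^0$ and $\NS$ are \emph{Galois equivariant}; this is not formal, because $\tau_x$ is in general defined only over $\Kbar$ and need not commute with the Galois action.

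First I would establish that the maps induced by $\tau_x^*$ on $\Pic^0\Abar$ and on $\NS\Abar$ do not depend on the chosen point $x$. Given a second point $x'\in X(\Kbar)$, simple transitivity of the torsor action produces a unique $a\in\Abar(\Kbar)$ with $x'=\tau_a(x)$, and a direct computation using the torsor axiom for $\mu$ gives $\tau_{x'}=\tau_x\circ t_a$, hence $\tau_{x'}^*=t_a^*\circ\tau_x^*$. Now $t_a^*$ acts trivially on $\Pic^0\Abar$, since a class lies in $\Pic^0\Abar$ exactly when it is invariant under all translations, and $t_a^*$ acts trivially on $\NS\Abar$ by the theorem of the square, as $t_a^*\calL\otimes\calL^{-1}=\phi_\calL(a)\in\Pic^0\Abar$. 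Consequently $\tau_{x'}^*$ and $\tau_x^*$ agree on both $\Pic^0$ and $\NS$.

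Galois equivariance then follows by combining this independence with the behaviour of $\tau_x$ under the Galois action. Because $\mu$ is defined over $K$, twisting the morphism $\tau_x$ by $\sigma\in\Gal(\Kbar/K)$ amounts to twisting the point, i.e.\ ${}^{\sigma}(\tau_x)=\tau_{{}^{\sigma}x}$; naturality of pullback under the Galois action then yields $\tau_{{}^{\sigma}x}^*({}^{\sigma}L)={}^{\sigma}(\tau_x^*L)$ for every $L\in\Pic\Xbar$. Restricting to $L\in\Pic^0\Xbar$ (respectively passing to classes in $\NS$), and using that the Galois action preserves $\Pic^0$ together with the independence statement to replace $\tau_{{}^{\sigma}x}^*$ by $\tau_x^*$, I obtain $\tau_x^*({}^{\sigma}L)={}^{\sigma}(\tau_x^*L)$, which is precisely the desired equivariance. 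The main obstacle is exactly this final point: $\tau_x$ itself fails to be Galois equivariant, so equivariance cannot be read off directly, and it is the translation invariance of $\Pic^0$ and $\NS$ (the abelian-variety input, via the theorem of the square) that repairs this defect.
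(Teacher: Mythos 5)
Your proof is correct and uses essentially the same argument as the paper: both hinge on the factorization $\tau_{x'}^* = t_a^*\circ\tau_x^*$, the $K$-rationality of $\mu$ giving ${}^\sigma(\tau_x)=\tau_{{}^\sigma x}$, and the theorem of the square together with translation-invariance of $\Pic^0$ to kill the discrepancy on $\Pic^0$ and $\NS$. The only difference is organizational — you isolate base-point independence as an intermediate step, while the paper computes the cocycle difference $\sigma(\tau_x^*\calL)\otimes\tau_x^*(\sigma\calL)^{-1}=\phi_\calM(b)$ directly — but the mathematical content is identical.
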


\begin{proof}
	Since $\tau_x\colon \Abar \to \Xbar$ is an isomorphism of varieties over $\Kbar$ it induces an isomorphism of exact sequences of abelian groups
	\begin{equation}\label{eq:nsseq}
		\xymatrix{
			0 \ar[r]& \Pic^0\Xbar \ar[d]^{\tau_x^*} \ar[r]& \Pic\Xbar \ar[d]^{\tau_x^*} \ar[r]& \NS\Xbar \ar[d]^{\tau_x^*} \ar[r]& 0\\
			0 \ar[r]& \Pic^0\Abar \ar[r]& \Pic\Abar \ar[r]& \NS\Abar \ar[r]& 0\,.
		}
	\end{equation}
	Let $\calL \in \Pic\Xbar$ and $\sigma \in \Gal(K)$. For the unique $b \in A(\Kbar)$ determined by $\mu(b,x) = \sigma(x)$ we have $\tau_{\sigma(x)}^* = t_b^*\circ \tau_x^*$ as maps $\Pic \Xbar \to \Pic \Abar$. Setting $\calM = \tau_x^*(\sigma(\calL))$ we have 
	\begin{align*}\label{eq:diff}
			\sigma(\tau_x^*(\calL))\otimes \tau_x^*(\sigma(\calL))^{-1} &= \tau_{\sigma(x)}^*(\sigma(\calL)) \otimes \tau_x^*(\sigma(\calL))^{-1}\\
			&= t_{b}^*\tau_x^*(\sigma(\calL)) \otimes \tau_x^*(\sigma(\calL))^{-1}\\
			&= t_b^*\calM\otimes\calM^{-1}\\
			&= \phi_\calM(b)\,.
	\end{align*}
	Galois equivariance of the map $\tau_x^* \colon  \NS\Xbar \to \NS\Abar$ follows from the fact that $\phi_\calM(b) \in \Pic^0\Xbar$. Galois equivariance of the map $\tau_x^*\colon \Pic^0\Xbar \to \Pic^0\Abar$ follows from the fact that $\phi_\calM = 0$ when $\calL \in \Pic^0\Xbar$, because then $\calM \in \Pic^0\Abar$ and so its image in $\NS\Abar$ is trivial.
	 \end{proof}
	 
	 \begin{Lemma}\label{lem:deltas}
	 	Let $\delta_A,\delta_X \colon \HH^0(K,\NS\Abar) \to \HH^1(K,\Pic^0\Abar)$ denote the connecting homomorphisms in the Galois cohomology of the rows of~\eqref{eq:nsseq}, where for $\delta_X$ we use the isomorphisms $\tau_x^*$ given by Lemma~\ref{Lemma2} to identify $\NS\Xbar$ and $\Pic^0\Xbar$ with $\NS\Abar$ and $\Pic^0\Abar$ , respectively. Suppose $\lambda \in \HH^0(K,\NS\Abar)$ is such that $\delta_A(\lambda) = 0$. Then $\delta_X(\lambda) = \pm\phi_\lambda([X])$, where $[X]$ denotes the class of $X$ in $\HH^1(K,A)$.
	 \end{Lemma}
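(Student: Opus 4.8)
The plan is to compute both connecting homomorphisms on the level of explicit cocycles, using a single compatible choice of lift so that the two cocycles can be compared term by term. Fix a lift $\calN \in \Pic\Abar$ of $\lambda$ and set $\calL := (\tau_x^*)^{-1}(\calN) \in \Pic\Xbar$; by Lemma~\ref{Lemma2} this $\calL$ is a lift of $\lambda_X := (\tau_x^*)^{-1}(\lambda)$, which is $\Gal(K)$-invariant because $\tau_x^*$ is an isomorphism of Galois modules on $\NS$. With these choices the standard description of the connecting map gives, for $\sigma \in \Gal(K)$,
\[
	\delta_A(\lambda)(\sigma) = \sigma(\calN)\otimes\calN^{-1}, \qquad \delta_X(\lambda)(\sigma) = \tau_x^*\bigl(\sigma(\calL)\otimes\calL^{-1}\bigr) = \tau_x^*(\sigma(\calL))\otimes\calN^{-1},
\]
where the identification of $\delta_X$ as a map into $\HH^1(K,\Pic^0\Abar)$ is exactly the transport by $\tau_x^*$ prescribed in the statement.

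Next I would form the ratio of the two cocycles and invoke the computation already carried out in the proof of Lemma~\ref{Lemma2}. Writing $b_\sigma \in A(\Kbar)$ for the unique element with $\mu(b_\sigma,x)=\sigma(x)$, and using $\sigma(\tau_x^*(\calL)) = \tau_{\sigma(x)}^*(\sigma(\calL)) = t_{b_\sigma}^*\tau_x^*(\sigma(\calL))$ together with $\calM := \tau_x^*(\sigma(\calL))$, that computation yields
\[
	\delta_X(\lambda)(\sigma)\otimes\delta_A(\lambda)(\sigma)^{-1} = \tau_x^*(\sigma(\calL))\otimes\sigma(\tau_x^*(\calL))^{-1} = \phi_\calM(b_\sigma)^{-1}.
\]
Since $\calM = \tau_x^*(\sigma(\calL))$ is again a lift of $\lambda$ and $\phi$ depends only on the class in $\NS\Abar$, we have $\phi_\calM = \phi_\lambda$, so the cocycle difference is $\sigma \mapsto \phi_\lambda(b_\sigma)^{\mp 1}$.

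Finally I would recognize $\sigma\mapsto b_\sigma$ as a cocycle representing $\pm[X] \in \HH^1(K,A)$ (the usual description of the class of a torsor via a chosen $\Kbar$-point), so that $\sigma\mapsto\phi_\lambda(b_\sigma)$ represents the image of $[X]$ under the map $\HH^1(K,A)\to\HH^1(K,\Pic^0\Abar)$ induced by the Galois-equivariant homomorphism $\phi_\lambda$, i.e.\ $\phi_\lambda([X])$. Combining, $\delta_X(\lambda) = \delta_A(\lambda)\mp\phi_\lambda([X])$ in $\HH^1(K,\Pic^0\Abar)$; imposing $\delta_A(\lambda)=0$ gives $\delta_X(\lambda)=\pm\phi_\lambda([X])$. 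The one genuinely delicate point is bookkeeping the Galois action through $\tau_x^*$, which is only defined over $\Kbar$: one must use $\sigma\circ\tau_x = \tau_{\sigma(x)}\circ\sigma$ and $\tau_{\sigma(x)} = \tau_x\circ t_{b_\sigma}$ correctly, and check that the various cocycle conditions (and hence the sign conventions hidden in the $\pm$) are compatible. All of the substantive content of this is already contained in the displayed identity from Lemma~\ref{Lemma2}, so the remaining work is essentially organizational.
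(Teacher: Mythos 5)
Your proposal is correct and follows essentially the same route as the paper: both compute the connecting homomorphisms on explicit cocycles via a lift $\calN$ and $\calL = (\tau_x^*)^{-1}(\calN)$, reduce to the translation identity $\sigma(\tau_x^*(\calL))\otimes\tau_x^*(\sigma(\calL))^{-1} = \phi_\calM(b_\sigma)$ from the proof of Lemma~\ref{Lemma2}, and recognize $\sigma \mapsto b_\sigma$ as a cocycle representing $\pm[X]$. The only difference is organizational: you establish the unconditional identity $\delta_X(\lambda) = \delta_A(\lambda) \mp \phi_\lambda([X])$ and impose $\delta_A(\lambda)=0$ only at the end, whereas the paper invokes the hypothesis mid-computation (in effect choosing the lift $\calN$ to be Galois-invariant, so that $\sigma(\calN)=\calN$).
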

	 
	 \begin{proof}
	 	Let $\calN \in \Pic\Abar$ be a lift of $\lambda$ and let $\calL \in \Pic \Xbar$ be such that $\tau_x^*\calL = \calN$. Then $\delta_X(\lambda)$ is the class of the $1$-cocycle given by $\tau_x^*\left(\sigma(\calL)\otimes\calL^{-1}\right)$. Let $b_\sigma \in A(\Kbar)$ be the $1$-cochain uniquely determined by $\mu(b_\sigma,\sigma(x)) = x$. Then $\tau_x^* = t_{b_\sigma}^*\circ \tau_{\sigma(x)}^*$. We compute
	 	\begin{align*}
	 		\tau_x^*\left(\sigma(\calL)\otimes\calL^{-1}\right) &= t^*_{b_\sigma}(\tau_{\sigma(x)}^*(\sigma(\calL))) \otimes \tau_x^*(\calL^{-1}) \\
	 		&= t_{b_\sigma}^*(\sigma(\tau_x^*(\calL)))\otimes \calN^{-1}\\
	 		&= t_{b_\sigma}^*(\sigma(\calN)) \otimes \calN^{-1}\\
	 		&= t_{b_\sigma}^*\calN \otimes \calN^{-1}\\
	 		&= \phi_\lambda(b_\sigma)\,,
	 	\end{align*}
	 	where the penultimate equality follows from the assumption $\delta_A(\lambda) = 0$. To conclude we note that (up to sign) $b_\sigma$ represents the class of $X$ in $\HH^1(K,A)$.
	 \end{proof}

The following result may be found in \cite{Berkovich}. We repeat the argument here for the sake of completeness.

\begin{Lemma}\label{lem:nBr}
	Let $\Abar$ be an abelian variety over an algebraically closed field of characteristic $0$. Then the multiplication-by-$n$ endomorphism of $\Abar$ induces multiplication by $n^2$ on the abelian groups $\NS \Abar$ and $\Br \Abar$ and multiplication by $n$ on $\Pic^0\Abar$.
\end{Lemma}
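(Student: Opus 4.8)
The plan is to treat the three groups in turn, starting from the action of $[n]^*$ on line bundles and then bootstrapping to $\Br\Abar$ via the Kummer sequence. The one external input I would invoke is Mumford's consequence of the theorem of the cube: for every $\calL\in\Pic\Abar$ one has $[n]^*\calL\isom\calL^{\otimes(n^2+n)/2}\otimes([-1]^*\calL)^{\otimes(n^2-n)/2}$.

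First, for $\Pic^0\Abar$: every $\calL\in\Pic^0\Abar$ is antisymmetric, i.e.\ $[-1]^*\calL\isom\calL^{-1}$, so substituting into Mumford's formula the two exponents combine to give $[n]^*\calL\isom\calL^{\otimes n}$; thus $[n]^*$ is multiplication by $n$ on $\Pic^0\Abar$. (Equivalently, $\Pic^0\Abar$ is the dual abelian variety and $[n]^*$ is the dual isogeny of $[n]$, again multiplication by $n$.) Next, for $\NS\Abar$, I would first show that $[-1]^*$ acts trivially there. The class of $\calL$ in $\NS\Abar$ is detected by the homomorphism $\phi_\calL\colon a\mapsto t_a^*\calL\otimes\calL^{-1}$ introduced above, and the assignment $[\calL]\mapsto\phi_\calL$ is injective (its kernel consists of the translation-invariant classes, namely those in $\Pic^0\Abar$); a short computation using that $\phi_\calL$ is a homomorphism and that $[-1]^*$ is $-1$ on $\Pic^0\Abar$ gives $\phi_{[-1]^*\calL}=\phi_\calL$, whence $[-1]^*\calL\equiv\calL$ in $\NS\Abar$. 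Reducing Mumford's formula modulo $\Pic^0\Abar$ then yields $[n]^*[\calL]=\tfrac{n^2+n}{2}[\calL]+\tfrac{n^2-n}{2}[\calL]=n^2[\calL]$.

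For $\Br\Abar$ the engine is the Kummer sequence, which for each $m$ gives a short exact sequence $0\to\Pic\Abar/m\to\HH^2_\text{\'et}(\Abar,\mu_m)\to\Br\Abar[m]\to0$ that is functorial in $\Abar$ and hence compatible with $[n]^*$. Since $\Pic^0\Abar$ is divisible we have $\Pic\Abar/m\isom\NS\Abar/m$, on which $[n]^*$ is multiplication by $n^2$ by the previous step. It remains to see that $[n]^*$ is multiplication by $n^2$ on the middle term. Here I would use the structural fact that the \'etale cohomology of an abelian variety is the exterior algebra on $\HH^1$, so that $\HH^2_\text{\'et}(\Abar,\Z/m)\isom\wedge^2\HH^1_\text{\'et}(\Abar,\Z/m)$; as $[n]$ induces multiplication by $n$ on $\HH^1$ (it is multiplication by $n$ on the Tate module), it induces multiplication by $n^2$ on $\HH^2$, and twisting by the constant sheaf $\mu_m$ does not change this. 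A short diagram chase then forces $[n]^*$ to be multiplication by $n^2$ on $\Br\Abar[m]$, and since $m$ is arbitrary and $\Br\Abar$ is torsion, on all of $\Br\Abar$.

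The main obstacle is the claim about the middle term $\HH^2_\text{\'et}(\Abar,\mu_m)$; everything else is a formal consequence of Mumford's formula and the functoriality of the Kummer sequence. This hinges on the exterior-algebra structure of $\HH^*_\text{\'et}(\Abar,\Z/m)$ together with the elementary fact that $[n]$ acts as $n$ on $\HH^1$. Over an arbitrary algebraically closed field of characteristic $0$ I would either cite this structural result directly or reduce to the complex case by the Lefschetz principle, where the identification $\HH^*(\Abar,\Z)=\wedge^*\HH^1(\Abar,\Z)$ of the singular cohomology ring makes the exponents $n^i$ transparent.
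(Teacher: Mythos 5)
Your proof is correct, and its core engine for the Brauer group is the same as the paper's: both run the Kummer sequence $0 \to \NS\Abar/m \to \HH^2_{\text{\'et}}(\Abar,\mu_m) \to \Br\Abar[m] \to 0$ (using divisibility of $\Pic^0\Abar$ to replace $\Pic\Abar/m$ by $\NS\Abar/m$) and reduce to showing that $[n]^*$ is multiplication by $n^2$ on the middle term, which both obtain from $\HH^2 \simeq \wedge^2\HH^1$ plus the fact that $[n]$ acts by $n$ on $\HH^1$ --- the paper phrases this as $\HH^2_{\text{\'et}}(\Abar,\mu_m) \simeq \Hom(\wedge^2 A[m],\mu_m)$ via $\HH^1_{\text{\'et}}(\Abar,\mu_m)\simeq\Hom(A[m],\mu_m)$, while you phrase it via $\pi_1$ and the Tate module; these are the same computation. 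Where you genuinely diverge is the N\'eron--Severi statement. You prove it geometrically, from the theorem-of-the-cube formula $[n]^*\calL \isom \calL^{\otimes(n^2+n)/2}\otimes([-1]^*\calL)^{\otimes(n^2-n)/2}$ together with a correct verification that $[-1]^*$ acts trivially on $\NS\Abar$ (via $\phi_{[-1]^*\calL}=\phi_\calL$ and injectivity of $[\calL]\mapsto\phi_\calL$ on $\NS\Abar$). The paper instead extracts the $\NS$ statement from the same cohomological computation: $\NS\Abar/m$ injects functorially into $\HH^2_{\text{\'et}}(\Abar,\mu_m)$, so $[n]^*=n^2$ on $\NS\Abar/m$ for every $m$, and finite generation of $\NS\Abar$ (hence $\bigcap_m m\NS\Abar = 0$) upgrades this to $\NS\Abar$ itself. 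Your route needs the extra geometric inputs (Mumford's formula, the characterization of $\Pic^0$ as the kernel of $\calL\mapsto\phi_\calL$) but is self-contained on the line-bundle side and also yields the $\Pic^0$ statement rather than citing it; the paper's route is more economical, disposing of $\NS\Abar$ and $\Br\Abar$ in a single stroke.
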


\begin{proof}
	The statement for $\Pic^0\Abar$ is well known (see \cite[page 75, (iii)]{MumfordBook}). 	For any $m \ge 1$, the Kummer sequence gives an exact sequence,
	\[
		0 \to \NS \Abar/m \to \HH^2_{\text{\'et}}(\Abar,\mu_m) \to \Br \Abar[m] \to 0\,,
	\]
	where we have used the fact that $\Pic \Abar/m \simeq \NS \Abar/m$, since $\Pic \Abar$ is an extension of $\NS \Abar$ by the divisible group $\Pic^0\Abar$. Since $\Br \Abar$ is torsion and $\NS \Abar$ is finitely generated it suffices to show that $[n]$ induces multiplication by $n^2$ on the middle term, $\HH^2_{\text{\'et}}(\Abar,\mu_m)$. From the Kummer sequence one obtains $\HH^1_{\text{\'et}}(\Abar,\mu_m) \simeq (\Pic\Abar)[m] \simeq \Hom(A[m],\mu_m)$. Then there are isomorphisms
\[
	\HH^2_{\text{\'et}}(\Abar,\mu_m)  \simeq \wedge^2 \HH^1_\text{\'et}(\Abar,\mu_m)(-1) \simeq \wedge^2 \Hom(A[m],\mu_m)(-1) \simeq \Hom( \wedge^2A[m],\mu_m)\,,
\]
and on the final term $[n]$ clearly acts as multiplication by $n^2$.
\end{proof}

For an integer $m$, an \defi{$m$-covering of $X$} is an $X$-torsor under $A[m]$ whose \defi{type} (cf. \cite[Section 2.3]{Skorobogatov}) is the map $\lambda_m \colon \Hom(A[m],\Kbar^\times) \to \Pic\Xbar$ obtained by composing the canonical isomorphism $\Hom(A[m],\Kbar^\times) \simeq (\Pic^0\Abar)[m]$ with the inverse of an isomorphism $\tau_x^*\colon\Pic^0\Xbar \simeq \Pic^0\Abar$ given by Lemma~\ref{Lemma2} and the obvious inclusions. See \cite[Section 3.3]{Skorobogatov} for more on $m$-coverings.

	The following is a variant of \cite[Lemma 4.6]{CV17}.

\begin{Lemma}\label{Lemma1}
	There exists an integer $p \ge 1$ (depending only on $A$) such that if $n$ is an integer and $\pi\colon Y \to X$ is an $n^p$-covering of $X$. Then the induced map $\pi^*\colon \Br X/\Br_0X \to \Br Y/\Br_0Y$ annihilates the $n$-torsion.
\end{Lemma}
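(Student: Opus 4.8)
The plan is to use the Hochschild--Serre sequence~\eqref{eq:definer} to split the statement about $\Br X/\Br_0X$ into a ``transcendental'' computation on the geometric Brauer group $\Br\Xbar$ and an ``algebraic'' computation on $\HH^1(K,\Pic\Xbar)$. Fix $\alpha\in\Br X$ whose class in $\Br X/\Br_0X$ is $n$-torsion, so that $n\alpha\in\Br_0X$; I must produce an integer $p$ (independent of $n$) with $\pi^*\alpha\in\Br_0Y$. Throughout I use that, after choosing compatible base points over $\Kbar$, the $n^p$-covering $\pi$ becomes the multiplication map $[n^p]\colon\Abar\to\Abar$.

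For the transcendental part, the group isomorphism $\tau_x^*\colon\Br\Xbar\to\Br\Abar$ (induced by the isomorphism of varieties $\tau_x$) identifies the geometric pullback $\pi^*$ with $[n^p]^*$, which by Lemma~\ref{lem:nBr} is multiplication by $n^{2p}$. Since constant classes die geometrically, the image of $\alpha$ in $\Br\Xbar$ is $n$-torsion, hence is killed by $n^{2p}$ for every $p\ge1$; thus $\pi^*\alpha\in\Br_1Y=\ker(\Br Y\to\Br\Ybar)$. By~\eqref{eq:definer} it then suffices to show $r_Y(\pi^*\alpha)=0$, and by functoriality of $r$ this equals $\pi^*(r_X(\alpha))$ for the induced map $\pi^*\colon\HH^1(K,\Pic\Xbar)\to\HH^1(K,\Pic\Ybar)$. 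Writing $\xi=r_X(\alpha)$, the relation $n\alpha\in\Br_0X=\ker r_X$ gives $n\xi=0$, so everything reduces to proving that $\pi^*$ annihilates $\HH^1(K,\Pic\Xbar)[n]$.

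For the algebraic part I would work with~\eqref{eq:NSX} and its pullback under $\pi$. By Lemma~\ref{Lemma2} the maps $\tau_x^*,\tau_y^*$ identify $\Pic^0$ and $\NS$ of $\Xbar,\Ybar$ with those of $\Abar$ \emph{Galois-equivariantly}, and under these identifications $\pi^*$ is multiplication by $n^p$ on $\Pic^0$ and by $n^{2p}$ on $\NS$ (again because $\pi$ is geometrically $[n^p]$). Chasing $\xi$ through the cohomology ladder, its image in $\HH^1(K,\NS\Xbar)$ is $n$-torsion and gets multiplied by $n^{2p}$, hence is killed; so $\pi^*\xi$ lands in the image of $i_*\colon\HH^1(K,\Pic^0\Ybar)\injects\HH^1(K,\Pic\Ybar)$, and the remaining $\Pic^0$-contribution is repeatedly divided using that $\pi^*$ acts by $n^p$ there. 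To make this honest I would factor the $n^p$-covering into a tower $X=Y_0\leftarrow Y_1\leftarrow\cdots\leftarrow Y_p=Y$ of $n$-coverings and iterate, pushing $\pi^*\xi$ into an increasingly divisible subgroup of $\HH^1(K,\Pic^0\overline{Y_i})$ at each stage.

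The main obstacle is that $\Pic\Xbar$ is a \emph{non-split} extension of $\NS\Xbar$ by $\Pic^0\Xbar$ and $\tau_x^*$ is not Galois-equivariant on all of $\Pic\Xbar$; equivalently, $\pi^*$ on the middle term is not literally a power of $n$, so the lift of $\pi^*\xi$ to $\HH^1(K,\Pic^0\overline{Y_i})$ is only well defined modulo the image of the connecting map $\delta_{Y_i}$, and these maps vary with $i$. Controlling this variation is exactly the role of Lemma~\ref{lem:deltas}, which gives $\delta_{Y_i}-\delta_A=\pm\phi_{\,\cdot\,}([Y_i])$, where the torsor classes satisfy $n^i[Y_i]=[X]$ in $\HH^1(K,A)$. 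The correction terms accumulated along the tower therefore involve $\phi_\lambda$ applied to these classes, and the technical heart is to annihilate them. Here is where $p$ must depend on $A$: choosing $p$ large enough that $n^p$ is divisible by the exponent of the kernel of a polarization $\phi_\lambda$ (and kills the torsion subgroup of $\NS\Abar$) forces $[n^p]$ to factor through the polarization isogeny, which is what trivializes the residual cocycles and yields $\pi^*\xi=0$. Verifying that this choice of $p$ genuinely kills every accumulated correction, uniformly in $n$, is the step I expect to require the most care.
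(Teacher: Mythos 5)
Your proposal has the same skeleton as the paper's proof: split via the sequence~\eqref{eq:definer} into a geometric part (killed because $\pi^*$ acts on $\Br\Abar$ as multiplication by a power of $n^2$, by Lemma~\ref{lem:nBr}) and an algebraic part; factor $\pi$ into a tower of $n$-coverings; and control the failure of $\tau^*$-compatibility of the extensions~\eqref{eq:nsseq} via Lemma~\ref{lem:deltas} and the divisibility relations among the classes $[Y_i]$. But the step you defer to the end --- the uniform-in-$n$ choice of $p$ --- is precisely the technical heart, and the mechanism you propose for it cannot work. You ask for $p$ such that ``$n^p$ is divisible by the exponent of the kernel of a polarization $\phi_\lambda$'': since $p$ must depend only on $A$ while $n$ is arbitrary, this fails outright whenever $n$ is coprime to that exponent (no power of $n$ is then divisible by it). Moreover, there is no single polarization in play: the corrections you must annihilate have the form $\phi_\gamma([Y_i])$ where $\gamma \in \HH^0(K,\NS\Abar)$ arises as a $\delta$-preimage of $n\beta$ and varies with the class $\alpha$ being pulled back, so a condition tailored to one fixed $\phi_\lambda$ cannot handle them. (A smaller issue: $r_X(\alpha)$ is undefined for transcendental $\alpha$; one must first pull back one level of the tower, after which the image in $\Br\Abar$ is already dead, and only then apply $r$ --- this is how the paper arranges the diagram chase. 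Your assertion that $i_*$ is injective is also false in general, though you correct this implicitly later.)

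The ingredient you are missing is much softer: the image of the connecting map $\delta_A \colon \HH^0(K,\NS\Abar) \to \HH^1(K,\Pic^0\Abar)$ is \emph{finite}, because the source is finitely generated and the target is torsion. Hence there is a single $q = q(A)$ such that for \emph{every} $n$, the $n$-primary part of $\im(\delta_A)$ is annihilated by $n^q$; the paper takes $p = q+3$. The chase then runs: after two steps of the tower the class equals $\epsilon_2(\beta)$ with $\beta \in \HH^1(K,\Pic^0\Abar)$ of $n$-power order and $n\beta = \delta_2(\gamma)$; the choice of $q$ gives $\delta_A(n^q\gamma)=0$, so Lemma~\ref{lem:deltas} applies to $n^q\gamma$ on each $Y_i$, yielding $\delta_i(n^q\gamma) = \pm\phi_{n^q\gamma}([Y_i])$; since $n^j_*[Y_{2+j}] = [Y_2]$ and $\phi_{n^q\gamma}$ is a homomorphism, $n^{q+1}\beta = \pm n^j\delta_{2+j}(n^q\gamma) \in \ker(\epsilon_{2+j})$ for all $j \ge 0$; taking $j = q+1 = p-2$ and using that each further pullback acts as multiplication by $n$ on $\HH^1(K,\Pic^0\Abar)$ gives $(\pi_p^*\circ\cdots\circ\pi_3^*)(\epsilon_2(\beta)) = \epsilon_p(n^{p-2}\beta) = 0$. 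No polarization kernels or factorization of $[n^p]$ through an isogeny enter at all; what makes $p$ depend only on $A$ is the finiteness of $\im(\delta_A)$, which your outline never invokes.
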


\begin{Remark}
	It would be interesting to determine if the this lemma holds with $p = 1$. This is the case when $\Br X = \Br_1 X$ and $\NS\Xbar = \Z$, so in particular when $\dim X = 1$.
\end{Remark}

\begin{proof}
	In this proof we abbreviate $\HH^i(K,\bullet)$ to $\HH^i(\bullet)$.
	Consider the exact sequence
    		\[
    			\HH^0(\Pic\Abar) \To \HH^0(\NS\Abar) \stackrel{\delta_A}{\To} \HH^1(\Pic^0\Abar) \stackrel{\epsilon_A}\To \HH^1(\Pic\Abar)
    		\]
    		coming from the Galois cohomology of the bottom row of~\eqref{eq:nsseq}. Since $\NS\Abar$ is finitely generated and $\HH^1(\Pic^0\Abar)$ is torsion, the image of $\delta_A$ is finite. Choose $q\ge 0$ such that for any integer $n$, the $n$-primary subgroup of the image of $\delta_A$ is annihilated by $n^q$. Clearly $q$ depends only on $A$. We will show that $p = q + 3$ will suffice.
    		
    		Suppose $n$ is an integer and $\pi \colon Y \to X$ is an $n^p$-covering. For any $i \ge 0$, multiplication by $n^i$ yields an exact sequence of Galois modules,
	\[
		0 \to A[n^{i}] \To A[n^{i+1}] \stackrel{[n^i]}\To A[n] \to 0\,.
	\]
	In particular, the successive quotients in the filtration $0 \subset A[n] \subset A[n^2] \subset \cdots \subset A[n^p]$ are all isomorphic to $A[n]$. From this we see that $\pi$ factors as
	\begin{equation}\label{eq:pi_is}
		 \pi \colon Y = Y_p \stackrel{\pi_p}\To Y_{p-1} \stackrel{\pi_{p-1}} \To \cdots \stackrel{\pi_2}\To Y_1 \stackrel{\pi_1}\To Y_0 = X\,,
	\end{equation}
	where each $Y_i$ is a torsor under $A = \Alb^0_{Y_i}$ and $\pi_i\colon Y_i \to Y_{i-1}$ is an $n$-covering.
	It may help the reader to note that multiplication by $n^i$ induces a map $n^i_* \colon \HH^1_\text{\'et}(X,A[n^p]) \to \HH^1_\text{\'et}(X,A[n^{p-i}])$ sending the class of $Y$ to the class of $Y_{p-i}$. 

Choose $y_p \in Y(\Kbar)$ and let $y_i$ denote its image in $Y_i(\Kbar)$ under the maps in~\eqref{eq:pi_is}. These $y_i$ induce isomorphisms $\tau_{y_i}\colon \Abar \to \overline{Y_i}$ which, by \cite[Prop. 3.3.2(ii)]{Skorobogatov}, fit into a commutative diagram
	\[
		\xymatrix{
			\Abar \ar[r]^{[n]}  \ar[d]^{\tau_{y_p}}& \Abar \ar[r]^{[n]}\ar[d]^{\tau_{y_{p-1}}}  & \cdots \ar[r]^{[n]} &\Abar \ar[r]^{[n]} \ar[d]^{\tau_{y_1}}  & \Abar\ar[d]^{\tau_{y_0}}\\
			\overline{Y_p} \ar[r]^{\pi_p} & \overline{Y}_{p-1} \ar[r]^{\pi_{p-1}} & \cdots \ar[r]^{\pi_2} &\overline{Y_1} \ar[r]^{\pi_1}& \overline{X}
		}
	\]	
	Moreover, the $\tau_{y_i}$ induce isomorphisms of groups $\tau_{y_i}^*\colon\Br\overline{Y_i} \simeq \Br\Abar$ and, by Lemma~\ref{Lemma2}, isomorphisms of Galois modules $\tau_{y_i}^*\colon\Pic^0\overline{Y_i} \simeq \Pic^0\Abar$ and $\tau_{y_i}^*\colon\NS\overline{Y_i} \simeq \NS\Abar$. Under these identifications the maps $\pi_i^*$ induce, by Lemma~\ref{lem:nBr}, multiplication by $n$, $n^2$ and $n^2$ on the abelian groups $\HH^1(\Pic^0\Abar)$, $\HH^1(\NS\Abar)$ and $\Br\Abar$, respectively.
	
	There are injective maps $r_i\colon \Br_1Y_i/\Br_0Y_i \to \HH^1(\Pic\overline{Y_i})$ (cf. \eqref{eq:definer}) and exact sequences
	\[
		\HH^0(\NS\overline{Y_i}) \to \HH^1(\Pic^0\overline{Y_i}) \to \HH^1(\Pic\overline{Y_i}) \to \HH^1(\NS\overline{Y_i})\,, \text{ and}
	\]
	\[
		0 \to \Br_1Y_i/\Br_0Y_i \to \Br Y_i/\Br_0Y_i \to \Br \overline{Y_i}\,.
	\]
	 Putting this all together, we obtain the following commutative diagram of abelian groups with exact rows. (Both maps from the second row to the bottom row are given by $\pi_p^*\circ\dots\circ\pi_2^*$.)

  		\[
    			\xymatrix{
    				&&&& \frac{\Br X}{\Br_0X} \ar[r] \ar[d]^{\pi_1^*}& \Br \Abar \ar[d]^{n^2} \\
    				&&&\frac{\Br_1Y_1}{\Br_0Y_1}  \ar@{^{(}->}[r] \ar@/_1.5pc/[dl]_{r_1} \ar@/^5.5pc/[dddd]&\frac{\Br Y_1}{\Br_0Y_1}\ar[r] \ar[dddd]^-{\pi_p^*\circ\dots\circ\pi_2^*}&\Br \Abar \\
				&& \HH^1(\Pic \overline{Y_1}) \ar[r] \ar[d]^{\pi_2^*}& \HH^1(\NS\Abar)\ar[d]^{n^2} \\
				\HH^0(\NS\Abar) \ar[r]^{\delta_2}\ar[d]^-{n^2} &\HH^1(\Pic^0\Abar) \ar[r]^{\epsilon_2} \ar[d]^-n& \HH^1(\Pic \overline{Y_2}) \ar[r]\ar[d]^-{\pi_3^*}& \HH^1(\NS\Abar) \\
				\vdots\ar[d]^-{n^2} & \vdots \ar[d]^-n & \vdots \ar[d]^-{\pi_p^*}\\
    				\HH^0(\NS\Abar) \ar[r]^{\delta_p} & \HH^1(\Pic^0\Abar) \ar[r]^{\epsilon_p}& \HH^1(\Pic \overline{Y_p}) & \frac{\Br_1Y_p}{\Br_0Y_p} \ar@{_{(}->}[l]_-{r_p}\ar@{^{(}->}[r]& \frac{\Br Y_p}{\Br_0Y_p}
    			}
    		\]
    		
    		Now suppose $\alpha \in \Br X/\Br_0X$ has order dividing $n$. Exactness of the second row shows that $\pi_1^*(\alpha) = \alpha_1$ for some $\alpha_1 \in \Br_1Y_1/\Br_0Y_1$. Let $\alpha_2 = (\pi_2^*\circ r_1)(\alpha_1)$ be the image of $\alpha_1$ in $\HH^1(\Pic\overline{Y_2})$. We will show that $(\pi_p^*\circ \cdots \circ \pi_3^*)(\alpha_2) = 0$ in $\HH^1(\Pic\overline{Y_p})$. Commutativity of the diagram then gives $r_p(\pi^*(\alpha)) = 0$. But $r_p$ is injective, so this gives $\pi^*(\alpha) = 0$ as required.
    		
    		Since $n\alpha_1 = 0$, exactness of the fourth row shows that $\alpha_2 = \epsilon_2(\beta)$ for some $\beta \in \HH^1(\Pic^0\Abar)$. Since $\HH^1(\Pic^0\Abar)$ is torsion, we may assume $\beta$ is annihilated by some power of $n$. Since $n\alpha_2 = 0$, we have $n\beta \in \ker(\epsilon_2)$. Hence $n\beta = \delta_2(\gamma)$ for some $\gamma \in \HH^0(\NS\Abar)$. Our choice of $q$ ensures that $\delta_A(n^q\gamma ) = 0$. Hence, by Lemma~\ref{lem:deltas} (applied with $Y_i$ in place of $X$), we have $\delta_i(n^q\gamma) = \pm\phi_{n^q\gamma}([Y_i])$, for $i = 0,\dots,p$. Since $n^j_*[Y_i] = [Y_{i-j}]$ in $\HH^1(\Pic^0\Abar)$ this gives (up to sign)
    		\[
    			n^{q+1}\beta = n^q\delta_2(\gamma) = \delta_2(n^q\gamma) = n^j\delta_{2+j}(n^q\gamma) \in \ker(\epsilon_{2+j}), \quad \text{for $j \ge 0$}\,.
    		\]
    		Applying this in the case $j = p-2 = q+1$ and using commutativity of the diagram we have
    		\[
    			0 = \epsilon_p(n^{q+1}\beta) = (\pi_p^*\circ \cdots \circ \pi_3^*\circ \epsilon_2)(\beta) = (\pi_p^*\circ \cdots \circ \pi_3^*)(\alpha_2)\,.
		\]
\end{proof}

The proof of Theorem~\ref{thm:completelycaptures} requires the following elementary result in linear algebra.

\begin{Lemma}\label{lem:linearalgebra}
	Let $M \in \Mat_{s,t}(\Z/n\Z)$ be an $s\times t$ matrix with rows ${\bf m}_i$ and let ${\bf c} \in \Mat_{s,1}(\Z/n\Z)$ be a column vector with entries $c_i$. The equation $M{\bf x} - {\bf c} = {\bf 0}$ has no solution with ${\bf x} \in \Mat_{t,1}(\Z/n\Z)$ if and only if there is some $\Z/n\Z$-linear combination of the row vectors $[{\bf m}_i | c_i ]$ of the form $[ {\bf 0} | c]$ with $c \ne 0$.
\end{Lemma}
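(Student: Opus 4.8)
The plan is to read the statement as the Fredholm-type alternative for the inhomogeneous system $M{\bf x}={\bf c}$ over the ring $\Z/n\Z$. A $\Z/n\Z$-linear combination of the augmented rows $[{\bf m}_i\mid c_i]$ with coefficients $y_i$ that produces $[{\bf 0}\mid c]$ is precisely the data of a row vector ${\bf y}=(y_1,\dots,y_s)$ with ${\bf y}M={\bf 0}$ and ${\bf y}{\bf c}=c$. So the claim becomes: $M{\bf x}={\bf c}$ is insoluble over $\Z/n\Z$ if and only if there is a ${\bf y}$ with ${\bf y}M={\bf 0}$ and ${\bf y}{\bf c}\ne 0$. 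One direction is immediate: given such a ${\bf y}$, any solution ${\bf x}$ would force ${\bf y}{\bf c}={\bf y}(M{\bf x})=({\bf y}M){\bf x}=0$, a contradiction, so no solution can exist. The whole content lies in the converse.

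For the converse I would diagonalize $M$ via the Smith normal form over $\Z$. Lift $M$ and ${\bf c}$ to $\tilde M\in\Mat_{s,t}(\Z)$ and $\tilde{\bf c}\in\Z^s$, and write $\tilde M=UDV$ with $U\in\GL_s(\Z)$, $V\in\GL_t(\Z)$, and $D$ rectangular-diagonal with invariant factors $d_1,\dots,d_r$. Reducing mod $n$ and substituting ${\bf x}'=V{\bf x}$, ${\bf c}'=U^{-1}\tilde{\bf c}$ (both being invertible changes of coordinate over $\Z/n\Z$) turns the system into the decoupled scalar equations $d_ix'_i=c'_i$ for $i\le r$ together with $0=c'_i$ for $r<i\le s$. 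Writing a prospective solution as ${\bf y}={\bf y}'U^{-1}$, the homogeneous condition ${\bf y}M={\bf 0}$ becomes $y'_id_i=0$ for $i\le r$ (with $y'_i$ unconstrained for $i>r$), while the pairing becomes ${\bf y}{\bf c}={\bf y}'{\bf c}'=\sum_i y'_ic'_i$. After this reduction the problem is entirely scalar and splits across the indices $i$.

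Assuming no solution exists, at least one scalar equation fails, and I would construct ${\bf y}'$ accordingly. If $c'_{i_0}\ne 0$ for some $i_0>r$, take ${\bf y}'={\bf e}_{i_0}$: the homogeneous conditions hold trivially and ${\bf y}'{\bf c}'=c'_{i_0}\ne 0$. If instead $\gcd(d_{i_0},n)\nmid c'_{i_0}$ for some $i_0\le r$, set $g=\gcd(d_{i_0},n)$, $m=n/g$, and take ${\bf y}'=m\,{\bf e}_{i_0}$; then $m d_{i_0}\equiv 0\pmod n$ (since $g\mid d_{i_0}$), while $m c'_{i_0}\equiv 0\pmod n$ would force $g\mid c'_{i_0}$, contrary to hypothesis, so ${\bf y}'{\bf c}'=m c'_{i_0}\ne 0$. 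In either case ${\bf y}={\bf y}'U^{-1}$, reduced mod $n$, is the required linear combination. The main obstacle — and the reason the naive argument over a field does not transfer verbatim — is that $\Z/n\Z$ is not a field, so insolubility of $d_{i_0}x'_{i_0}=c'_{i_0}$ is governed by whether $\gcd(d_{i_0},n)$ divides $c'_{i_0}$ rather than by $d_{i_0}\ne 0$; the Smith normal form is exactly the device that converts the matrix hypothesis into these scalar gcd-divisibility conditions, after which the choice of ${\bf y}'$ is forced. (Alternatively, one could obtain the converse conceptually from the self-injectivity of $\Z/n\Z$, applying $\Hom(-,\Q/\Z)$ to ${\Z/n\Z}^t\xrightarrow{M}{\Z/n\Z}^s\to\coker(M)\to 0$ and separating the nonzero image of ${\bf c}$ in $\coker(M)$ by a character; I prefer the Smith normal form version as it is elementary and self-contained.)
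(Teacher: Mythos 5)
Your proof is correct and takes essentially the same route as the paper's: both diagonalize using a Smith normal form of an integer lift of $M$, reduce to decoupled scalar equations over $\Z/n\Z$, and build the annihilating row vector by scaling the offending row by $n/\gcd(d_{i_0},n)$ (the paper writes $PM=DQ$ rather than $M=UDV$, which is the same decomposition). The only cosmetic difference is that the paper states the scalar solvability criterion as $\ord(b_i)\mid\ord(d_i)$ instead of your gcd-divisibility condition; these are equivalent in the cyclic group $\Z/n\Z$.
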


\begin{proof}
        If such a linear combination exists, then clearly there is no
solution. For the converse, we use that there are invertible matrices
$P \in \GL_s(\Z/n\Z)$, $Q \in \GL_t(\Z/n\Z)$ and a diagonal matrix $D
= \operatorname{diag}(d_i) \in \operatorname{Mat}_{s,t}(\Z/n\Z)$ with
$d_i \mid d_{i+1}$ for $i = 1, \dots, \min\{s,t\}-1$ such that $PM =
DQ$. This follows from the existence of a Smith Normal Form of a lift
of $M$ to $\operatorname{Mat}_{s,t}(\Z)$, since reduction modulo $n$
gives a homomorphism $\GL_r(\Z) \to \GL_r(\Z/n\Z)$. For notational
convenience set $d_i := 0$ for $i > t$ and ${\bf b} := P{\bf c}$. As
$P$ is invertible, there is a solution to $M{\bf x} = {\bf c}$ if and
only if there is a solution to $DQ{\bf x} = {\bf b}$. As $Q$ is
invertible, such a solution exists if and only if $d_iy_i = b_i$ has a
solution with $y_i \in \Z/n\Z$ for all $i= 1, \dots, s$. For given
$i$, the equation $d_iy_i = b_i$ can be solved if and only if the
order of $b_i$ divides the order of $d_i$. Therefore, if $M{\bf x} =
{\bf c}$ has no solution, then there is some $i_0$ is such that
$\ord(b_{i_0}) \nmid \ord(d_{i_0})$. Then $\ord(d_{i_0})$ times the
$i_0$-th row of the augmented matrix $[DQ|{\bf b}] = [PM|P{\bf c}]$ is
of the form $[{\bf 0}|c]$ with $c \ne 0$.
        \end{proof}

\section{Compatibility of the Tate and Brauer-Manin pairings}

In this section we assume $X$ is a torsor under an abelian variety $A = \Alb^0_X$ over a local field $K$ of characteristic $0$. Let $\CH^0X$ denote the Chow group of $0$-cycles on $X$ modulo rational equivalence, and let $A^0X$ denote the kernel of the degree map on $\CH^0X$. For a class $\alpha \in \Br X$ the evaluation map $\alpha \colon X(K) \to \Br K$ induces a homomorphism $\alpha \colon \CH^0X \to \Br K$ defined by sending the class $z \in \CH^0X$ of a closed point $P$ to the image under the corestriction map $\Cor_{K(P)/K}\colon \Br K(P) \to \Br K$ of $\alpha(P) \in \Br K(P)$. This gives a bilinear pairing on $\CH^0X \times \Br X$. When restricted to $\Br_{1/2}X$, this pairing is compatible with the Tate pairing in the sense that there is a commutative diagram of pairings,

	\begin{equation}\label{eq:pairings}
		\begin{array}[c]{cccccc}
			\langle\,,\,\rangle_\textup{Tate}\colon &\Alb^0_X(K) & \times & \HH^1\left(K,\Pic^0 \Xbar\right) & \rightarrow & \Br K\\
			&\rotatebox{90}{$\to$} && \rotatebox{90}{$\leftarrow$}&& \rotatebox{90}{$=$}\\
			&A^0X & \times & \HH^1(K,\Pic\Xbar) & \rightarrow & \Br K\\
			&\rotatebox{90}{$\hookleftarrow$} && \rotatebox{90}{$\rightarrow$}&& \rotatebox{90}{$=$}\\
			\langle\,,\,\rangle_\textup{eval}\colon &\CH^0X& \times & \Br_1 X & \rightarrow & \Br K\,.
		\end{array}
	\end{equation}
	The maps to $\HH^1(K,\Pic\Xbar)$ in the diagram are the maps $i_*$ and $r$ defined in Section~\ref{notation}. Since $K$ is a local field, $r$ is surjective. The pairing in the middle row is induced by $\langle \,,\, \rangle_\textup{eval}$. This is well defined because $\ker(r) = \Br_0X$ which pairs trivially with $A^0X$. For $X$ a genus $1$ curve, this goes back to Lichtenbaum \cite{Lichtenbaum}. The general case is given by \cite[Proposition 3.4]{Kai}. The reader will note that while the statement there assumes the existence of a proper regular model for $X$, the proof that the pairings are compatible does not make use of this assumption. Alternatively, when $X(K) \ne \emptyset$ (which is the only case we will use) one can apply \cite[Theorem 3.5]{CipKrash}.

The compatibility of the pairings in~\eqref{eq:pairings} has the following consequences.

\begin{Corollary}\label{cor:2}
	For any $\alpha \in \Br_{1/2}X$, the homomorphism $\alpha \colon A^0X \to \Br K$ factors through $\Alb^0_X(K)$. Moreover, for any $a \in \Alb^0_X(K)$ and $x \in X(K)$ we have $\alpha(\mu(a,x)) = \alpha(x) + \alpha(a)$.
\end{Corollary}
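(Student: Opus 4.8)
The plan is to deduce both assertions directly from the compatibility of pairings recorded in~\eqref{eq:pairings}, together with the definition $\Br_{1/2}X = r^{-1}\bigl(i_*\HH^1(K,\Pic^0\Xbar)\bigr)$. Fix $\alpha \in \Br_{1/2}X$. By definition of $\Br_{1/2}X$ there is a class $\eta \in \HH^1(K,\Pic^0\Xbar)$ with $r(\alpha) = i_*(\eta)$. Write $\operatorname{alb}\colon A^0X \to \Alb^0_X(K)$ for the Albanese map, i.e.\ the left-hand vertical map in~\eqref{eq:pairings}, and let $\langle\,,\,\rangle_{\textup{mid}}$ denote the pairing in the middle row of~\eqref{eq:pairings}. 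The whole argument is then a diagram chase followed by one geometric computation.

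For the first assertion, let $z \in A^0X$. Reading~\eqref{eq:pairings} from the bottom row upward gives
\[
\alpha(z) = \langle z,\alpha\rangle_{\textup{eval}} = \langle z, r(\alpha)\rangle_{\textup{mid}} = \langle z, i_*(\eta)\rangle_{\textup{mid}} = \langle \operatorname{alb}(z),\eta\rangle_{\textup{Tate}},
\]
where the second equality is the compatibility of the eval and middle pairings (valid since $z \in A^0X$ and $\alpha \in \Br_1X$), the third is the choice of $\eta$, and the fourth is the compatibility of the middle and Tate pairings. The right-hand side depends on $z$ only through $\operatorname{alb}(z)$, so $\alpha|_{A^0X}$ factors through $\operatorname{alb}$; explicitly the induced homomorphism $\overline{\alpha}\colon\Alb^0_X(K)\to\Br K$ may be taken to be $a \mapsto \langle a,\eta\rangle_{\textup{Tate}}$. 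This establishes the first sentence, and in what follows I write $\alpha(a) := \overline{\alpha}(a)$ for $a \in \Alb^0_X(K)$.

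For the second assertion I would apply this to a single well-chosen degree-zero cycle. Given $a \in \Alb^0_X(K)$ and $x \in X(K)$, the point $\mu(a,x) = \tau_x(a)$ lies in $X(K)$, so the $0$-cycle $[\mu(a,x)] - [x]$ lies in $A^0X$. Additivity of the evaluation pairing gives $\alpha(\mu(a,x)) - \alpha(x) = \alpha\bigl([\mu(a,x)]-[x]\bigr)$, and by the factorization just established this equals $\overline{\alpha}\bigl(\operatorname{alb}([\mu(a,x)]-[x])\bigr)$. It remains to identify the Albanese image. Over $\Kbar$ the isomorphism $\tau_x\colon\Abar \to \Xbar$ carries the Albanese map of $\Xbar$ to the sum map on $0$-cycles of $\Abar$, and $\tau_x^{-1}(\mu(a,x)) = \tau_x^{-1}(\tau_x(a)) = a$ while $\tau_x^{-1}(x) = 0$; hence $\operatorname{alb}([\mu(a,x)]-[x]) = a - 0 = a$. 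Therefore $\alpha(\mu(a,x)) - \alpha(x) = \overline{\alpha}(a) = \alpha(a)$, which is the desired formula.

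The only points requiring care — and the closest thing to an obstacle — are bookkeeping rather than substance. First, one must check that the factored map $\overline{\alpha}$ is unambiguous where it is used: since $x \in X(K)$ exists, $\operatorname{alb}$ is surjective (every $a$ equals $\operatorname{alb}([\mu(a,x)]-[x])$), so the factorization is determined on all of $\Alb^0_X(K)$ and is independent of the auxiliary choice of $\eta$. Second, one must justify the description of $\operatorname{alb}$ on $0$-cycles via the torsor identification $\tau_x^{-1}$ and its compatibility with the left vertical map of~\eqref{eq:pairings}; this is exactly the identification of $\Xbar$ with $\Abar$ used throughout. Everything else is linearity of the pairings and the commutativity already asserted for~\eqref{eq:pairings}.
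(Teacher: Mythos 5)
Your proof is correct and takes essentially the same route as the paper, which states Corollary~\ref{cor:2} as an immediate consequence of the commutativity of~\eqref{eq:pairings}: your diagram chase $\alpha(z)=\langle z,r(\alpha)\rangle=\langle \operatorname{alb}(z),\eta\rangle_{\textup{Tate}}$ is exactly the intended argument, spelled out. The additional bookkeeping you supply (surjectivity of the Albanese map when $X(K)\neq\emptyset$, and the identification $\operatorname{alb}([\mu(a,x)]-[x])=a$ via $\tau_x$, which is valid under the paper's convention $A=\Alb^0_X$) is accurate and fills in details the paper leaves implicit.
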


\begin{Corollary}\label{cor:1}
	Suppose $X(K) \ne \emptyset$ and that $\alpha \in \Br_{1/2}X$ and $\alpha' \in \HH^1(K,\Pic^0\Xbar)$ are such that $i_*(\alpha') = r(\alpha)$. The following are equivalent:
	\begin{enumerate}
		\item $\alpha' = 0$;
		\item $\alpha \in \Br_0X$;
		\item The evaluation map $\alpha\colon X(K) \to \Br K$ is constant.
	\end{enumerate}
\end{Corollary}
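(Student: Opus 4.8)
The plan is to establish the cycle of implications $(1) \Rightarrow (2) \Rightarrow (3) \Rightarrow (1)$. The first two implications are formal; the substance of the corollary lies in $(3) \Rightarrow (1)$, where I will combine Corollary~\ref{cor:2}, the compatibility of pairings recorded in~\eqref{eq:pairings}, and the nondegeneracy of Tate's local duality pairing.

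For $(1) \Rightarrow (2)$, suppose $\alpha' = 0$. Then $r(\alpha) = i_*(\alpha') = 0$, and the exactness of~\eqref{eq:definer} identifies $\ker(r)$ with $\Br_0 X$; hence $\alpha \in \Br_0 X$. For $(2) \Rightarrow (3)$, if $\alpha \in \Br_0 X$ then $\alpha = \pr^*\beta$ for some $\beta \in \Br K$, where $\pr\colon X \to \Spec K$ is the structure morphism. For any $x \in X(K)$, regarded as a section of $\pr$, functoriality gives $\alpha(x) = x^*\pr^*\beta = \beta$, so the evaluation map is constant with value $\beta$.

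The main work is $(3) \Rightarrow (1)$. Since $X(K) \neq \emptyset$, fix $x_0 \in X(K)$; because $X$ is an $A$-torsor, the map $A(K) \to X(K)$, $a \mapsto \mu(a, x_0)$, is a bijection. By Corollary~\ref{cor:2} the homomorphism $\alpha\colon A^0X \to \Br K$ factors through a homomorphism $\bar\alpha\colon \Alb^0_X(K) = A(K) \to \Br K$ satisfying $\alpha(\mu(a,x_0)) = \alpha(x_0) + \bar\alpha(a)$ for all $a \in A(K)$. Hence the evaluation map on $X(K)$ is constant if and only if $\bar\alpha$ vanishes identically. I then read off $\bar\alpha$ from the commutative diagram~\eqref{eq:pairings}: for a degree-zero zero-cycle $z \in A^0X$ with Albanese image $a \in A(K)$, compatibility gives
\[
    \bar\alpha(a) = \langle z, \alpha \rangle_\textup{eval} = \langle a, \alpha' \rangle_\textup{Tate}\,.
\]
The Albanese map $A^0X \to A(K)$ is surjective (it sends $[\mu(a,x_0)] - [x_0]$ to $a$), so $\bar\alpha = \langle\,\cdot\,,\alpha'\rangle_\textup{Tate}$ on all of $A(K)$. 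Thus $(3)$ forces $\langle a, \alpha'\rangle_\textup{Tate} = 0$ for every $a \in A(K)$. Since $\Pic^0\Xbar \cong \Pic^0\Abar$ is the abelian variety dual to $A$, Tate's local duality theorem says this pairing is nondegenerate, identifying $\HH^1(K,\Pic^0\Xbar)$ with the Pontryagin dual of $A(K)$ (see \cite{MilneADT}); therefore $\alpha' = 0$, which is $(1)$.

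The step requiring the most care is this last appeal to nondegeneracy at the archimedean places. For $K = \C$ the claim is vacuous, since $\HH^1(K,\Pic^0\Xbar) = 0$ and $\Br K = 0$. For $K = \R$ the pairing factors through the finite component group $\pi_0(A(\R))$, and one must invoke local duality in its Tate-cohomology form; the essential point is that it remains nondegenerate in the $\HH^1$-variable, so the argument is unaffected. The only other thing to confirm is that the factored homomorphism $\bar\alpha$ really agrees with the Tate pairing against $\alpha'$, which is precisely the commutativity asserted in~\eqref{eq:pairings}.
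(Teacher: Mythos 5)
Your proof is correct and takes essentially the same approach as the paper: the implications (1) $\Rightarrow$ (2) $\Rightarrow$ (3) are formal from $\ker(r) = \Br_0X$, and (3) $\Rightarrow$ (1) combines surjectivity of the Albanese map $X(K) \to \Alb^0_X(K)$, the compatibility diagram~\eqref{eq:pairings}, and triviality of the right kernel of the local Tate pairing. Your explicit treatment of the archimedean places (via the Tate-cohomology form of local duality) spells out a point the paper leaves implicit, but it is the same argument.
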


\begin{proof}
	We trivially have (1) $\Rightarrow$ (2) $\Rightarrow$ (3), since $\ker(r) = \Br_0X$. It thus suffices to show that (3) implies (1). For this, suppose the evaluation map is constant and let $x_0 \in X(K)$. The Albanese map sending $x \in X(K)$ to the class of $x-x_0$ surjects onto $\Alb^0_X(K)$. The compatibility in~\eqref{eq:pairings} together with the fact that $\alpha$ is constant on $X(K)$ therefore implies that $\alpha'$ is in the right kernel of the Tate pairing. But the right kernel is trivial, so $\alpha' = 0$.
\end{proof}

\section{Proofs of the theorems}\label{sec:proofs}

Let $X$ be a torsor under an abelian variety $A$ over a number field $k$ and let $\hat{A} = \Pic^0_X$ be the Picard variety of $X$. Let $\lambda_n \colon \hat{A}[n] \to \Pic \Xbar$ be the type map of an $n$-covering of $X$ and set $\Br_{n} X := r^{-1}(\lambda_{n*}(\HH^1(k,\hat{A}[n])))$ where $r$ is as in~\eqref{eq:definer}.

\begin{Proposition}\label{prop:desc}
	Suppose $X$ is locally soluble. The following are equivalent.
	\begin{enumerate}
		\item $X(\A)^{\Br_nX}= \emptyset$.
		\item No adelic point on $X$ lifts to an $n$-covering of $X$.
		\item The class of $X$ in $\Sha(k,A)$ is not divisible by $n$.
		\item There exists $Y$ in $\Sha(k,\hat{A})[n]$ pairing nontrivially with $X$ under the Cassels-Tate pairing.
	\end{enumerate}
	These equivalent conditions imply that there exists $\beta \in \Bcyr X[n]$ such that $X(\A)^\beta = \emptyset$. Moreover, if $X(\A)^{\Br_{1/2}} = \emptyset$, then there exists an $n$ for which the conditions above hold.
\end{Proposition}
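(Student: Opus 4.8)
The plan is to establish the cycle of equivalences (1)$\Leftrightarrow$(2)$\Leftrightarrow$(3)$\Leftrightarrow$(4) and then the two supplementary assertions, keeping track of where the possibility that $\Sha$ is infinite forces extra care.

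First I would treat (1)$\Leftrightarrow$(2) by descent. Since an $n$-covering of $X$ is an $X$-torsor under $A[n]$ of type $\lambda_n$, and $\Br_nX = r^{-1}(\lambda_{n*}\HH^1(k,\hat A[n]))$ is precisely the subgroup of $\Br_1X$ cut out by this type, the fundamental exact sequence of descent theory (\cite[\S6]{Skorobogatov}) gives $X(\A)^{\Br_nX}=\bigcup_{Y}\pi_Y(Y(\A))$, the union running over all $n$-coverings $\pi_Y\colon Y\to X$ (the twists of a fixed one by $\HH^1(k,A[n])$). Hence (1) holds exactly when no adelic point lies in the image of some $Y(\A)$, which is (2). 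For (2)$\Leftrightarrow$(3) I would use that each $n$-covering $Y$ is a torsor under $A=\Alb^0_Y$ whose class $c=[Y]\in\HH^1(k,A)$ satisfies $nc=[X]$, and is locally soluble iff $c\in\Sha(k,A)$. An adelic point lifts to some $n$-covering iff some $n$-covering is locally soluble (take local lifts; conversely project an adelic point of a locally soluble $Y$), iff there is $c\in\Sha(k,A)$ with $nc=[X]$, i.e.\ iff $[X]\in n\Sha(k,A)$. Negating yields (2)$\Leftrightarrow$(3).

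The only real difficulty is (3)$\Leftrightarrow$(4). The implication (4)$\Rightarrow$(3) is formal: if $nc=[X]$ with $c\in\Sha(k,A)$ and $Y\in\Sha(k,\hat A)[n]$ then $\langle[X],Y\rangle=\langle nc,Y\rangle=\langle c,nY\rangle=0$ by bilinearity. For (3)$\Rightarrow$(4) I would invoke Tate's theorem that the Cassels-Tate pairing is non-degenerate modulo the maximal divisible subgroups $D_A\subseteq\Sha(k,A)$ and $D_{\hat A}\subseteq\Sha(k,\hat A)$ (\cite[I.6.26]{MilneADT}). Writing $G=\Sha(k,A)/D_A$ and $H=\Sha(k,\hat A)/D_{\hat A}$, divisibility of $D_A$ gives $D_A\subseteq n\Sha(k,A)$, so (3) says the image of $[X]$ in $G/nG$ is nonzero, while $\Sha(k,\hat A)[n]\twoheadrightarrow H[n]$. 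It then suffices that the induced pairing $G/nG\times H[n]\to\frac1n\Z/\Z$ be non-degenerate on the left. The structural input making this work without finiteness of $\Sha$ is that $\Sha(k,A)[n]$ is finite (finiteness of the $n$-Selmer group): for each prime $p\mid n$ this forces $G_p$ to be a reduced $p$-group with finite $p$-socle, hence finite, and likewise $H_p$; on finite groups a non-degenerate pairing is perfect, and this perfectness descends to $G/nG\times H[n]$. Thus a nonzero class in $G/nG$ pairs nontrivially with some element of $H[n]$, producing the desired $Y$. I expect this to be the main obstacle, precisely because the whole point is to avoid assuming $\Sha$ finite.

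Finally I would deduce the two supplements. Feeding the $Y\in\Sha(k,\hat A)[n]$ just produced into the compatibility of the Tate and Brauer-Manin pairings (\cite[Th\'eor\`eme 6]{Manin}, or diagram~\eqref{eq:pairings}), choose $\beta\in\Br_{1/2}X$ with $r(\beta)=i_*(Y)$; Corollary~\ref{cor:1} applied over each $k_v$ shows $\beta\in\Bcyr X$ because $Y$ is everywhere locally trivial, and $r(n\beta)=i_*(nY)=0$ shows $n\beta\in\Br_0X$, so after adjusting by a constant class (harmless for the obstruction) one may take $\beta\in\Bcyr X[n]$. Being locally constant, $\beta$ has constant evaluation $\sum_v\inv_v\beta(x_v)$ on $X(\A)$, equal to $\langle[X],Y\rangle\ne0$, whence $X(\A)^\beta=\emptyset$. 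For the last clause I would use that $X$ is projective, so $X(\A)=\prod_vX(k_v)$ is compact, together with $\Br_{1/2}X=\bigcup_n\Br_nX$ (an increasing union, since $r(\Br_nX)=i_*(\HH^1(k,\hat A)[n])$ exhausts $i_*\HH^1(k,\hat A)=r(\Br_{1/2}X)$). Then $X(\A)^{\Br_{1/2}}=\bigcap_nX(\A)^{\Br_nX}$ is a directed intersection of closed subsets of a compact space; if it is empty, some single $X(\A)^{\Br_nX}$ is already empty, giving the required $n$.
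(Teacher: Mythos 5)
Your proposal is correct, and most of it — the descent equivalences (1)$\Leftrightarrow$(2)$\Leftrightarrow$(3), the construction of $\beta$ from $Y$ via Manin's Th\'eor\`eme 6 and the local triviality of $Y$, and the compactness argument for the final clause — coincides with the paper's proof. The genuine difference is the implication (3)$\Rightarrow$(4). The paper handles it by a case split on whether $[X]$ is divisible by $n$ in the full Weil--Ch\^atelet group $\HH^1(k,A)$, citing \cite[Lemma I.6.17]{MilneADT} in one case and \cite[Theorem 4]{CreutzBLMS} in the other. You instead argue uniformly: starting from Tate's theorem that the kernels of the Cassels--Tate pairing are the maximal divisible subgroups, you use finiteness of $\Sha(k,A)[n]$ and $\Sha(k,\hat{A})[n]$ (Selmer finiteness) to see that each primary component of $G = \Sha(k,A)/\Div$ and $H = \Sha(k,\hat{A})/\Div$ is a reduced $p$-group with finite socle, hence finite; a non-degenerate pairing of finite groups is perfect, perfectness passes to $G/nG \times H[n]$, and $H[n]$ lifts into $\Sha(k,\hat{A})[n]$ because the divisible subgroup is a direct summand. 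Each of these steps is sound, so your argument in effect re-proves the content of the two results the paper cites, giving a self-contained treatment (modulo the classical Cassels--Tate theorem) that avoids both the case split and the appeal to the author's earlier work; what the paper's route buys is brevity and reliance on stated references. Two minor corrections: the non-degeneracy theorem you invoke is \cite[Theorem I.6.13(a)]{MilneADT} — the reference I.6.26 you give is a different statement about archimedean cohomology — and, in the supplement, replacing $\beta$ by an $n$-torsion representative requires checking that the constant class $n\beta \in \Br_0 X$ is divisible by $n$ in $\Br k$ (a short computation with local invariants, the only issue being the real places); the paper elides this point in exactly the same way, so it is not a gap relative to the paper's own standard of detail.
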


\begin{proof}
	(1) and (2) are equivalent by descent theory (e.g., \cite[Theorem 6.1.2(a)]{Skorobogatov}). The equivalence of (2) and (3) follows from \cite[Proposition 3.3.5]{Skorobogatov}. When $X$ is divisible by $n$ in $\HH^1(k,A)$, the equivalence of (3) and (4) follows from \cite[Lemma I.6.17]{MilneADT}, while when $X$ is not divisible by $n$ in $\HH^1(k,A)$ it follows from \cite[Theorem 4]{CreutzBLMS}. Assuming (4), let $\beta \in \Bcyr X$ be such that its image in $\HH^1(k,\Pic\Xbar)$ is equal to that of $Y$. Modifying by a constant algebra if necessary we may take $\beta \in \Bcyr X[n]$. That $X(\A)^\beta = \emptyset$ for such $\beta$ follows immediately from \cite[Th\'eor\`eme 6]{Manin}. 
	
	For the final statement we use that $\Br_{1/2}X = \bigcup_{n \ge 1} \Br_{n}X$, which holds because $\HH^1(k,\hat{A})$ is torsion and the maps $\HH^1(k,\hat{A}[n]) \to \HH^1(k,\hat{A})[n]$ are all surjective. Noting also that $\Br_dX \subset \Br_nX$ for $d \mid n$, a compactness argument shows that if $X(\A)^{\Br_{1/2}}= \emptyset$, then $X(\A)^{\Br_nX} = \emptyset$ for some $n$.
	\end{proof}

\begin{proof}[Proof of Theorem~\ref{thm:completelycaptures}]
	Let $B \subset \Br_{1/2} X$ be a subgroup such that $X(\A_k)^B = \emptyset$. If $X(\A_k) = \emptyset$, then $\beta = 0$ satisfies the conclusion of the theorem. Hence we may assume that $X(\A_k) \ne \emptyset$. By compactness we may assume that $B$ is finitely generated, say $B = \langle \alpha_1,\dots,\alpha_m \rangle$. Let $n$ be such that the $n\alpha_i = 0$ for all $i = 1,\dots,m$, which exists since $\Br X$ is a torsion group by a result of Grothendieck. Let $(y_v) \in X(\A_k)$ and define $c_i := \langle (y_v),\alpha_i\rangle_\textup{BM} = \sum_v \inv_v \alpha_i(y_v) \in \Q/\Z[n]$. Let $S$ be a finite set of places of $k$ such that the evaluation maps $\alpha_i \colon X(k_v) \to \Br k_v$ are trivial for all $i = 1,\dots,m$ and all $v \notin S$. Set $V := \prod_{v \in S}\Alb^0_X(k_v)/n$. By Corollary~\ref{cor:2} each $\alpha_i$ induces a homomorphism of finite $\Z/n\Z$-modules $\phi_i \colon V \to \Q/\Z[n]$ sending $(a_v)_{v\in S}$ to $\sum_{v \in S}\operatorname{inv}_v\alpha_i(a_v)$. Moreover, by the second statement of the corollary, the condition $X(\A_k)^B = \emptyset$ is equivalent to the insolubility of the system of $\Z/n\Z$-linear equations $\phi_i({\bf x}) = c_i$. By Lemma~\ref{lem:linearalgebra} there are $b_i \in \Z/n\Z$ such that $\sum{b_i\phi_i} = 0$ with $\sum {b_i}c_i = c \ne 0$. Then $\beta := \sum {b_i\alpha_i} \in B$ takes constant value $c \ne 0$ on $X(\A_k)$. Hence $X(\A_k)^\beta = \emptyset$. Moreover, the maps $\beta \colon X(k_v) \to \Br k_v$ are necessarily all constant, which implies that $\beta \in \Bcyr X$ by Corollary~\ref{cor:1}.
	\end{proof}

\begin{proof}[Proof of Theorem~\ref{prop2}]
	This follows immediately from Theorem~\ref{thm:completelycaptures} or (independently) from Proposition~\ref{prop:desc}. 
\end{proof}

\begin{proof}[Proof of Theorem~\ref{prop1}]
	It suffices to prove $X(\A_k)^{\Br_{1/2}} \subset X(\A_k)^{\Br}$, the other containment being obvious. Let $x \in X(\A_k)^{\Br_{1/2}}$ and let $\alpha \in \Br X$. Since $\Br X$ is torsion, there is an integer $n \ge 1$ such that $n\alpha = 0$. By Proposition~\ref{prop:desc} the assumption $x \in X(\A_k)^{\Br_{1/2}}$ implies that $x$ lifts to an adelic point $y \in Y(\A_k)$ for some $n^p$-covering $\pi\colon Y \to X$, where $p$ is as given by Lemma~\ref{Lemma1}. Now Lemma~\ref{Lemma1} and functoriality of the Brauer-Manin pairing give $\langle x,\alpha \rangle_\textup{BM} = \langle \pi(y) , \alpha \rangle_\textup{BM} = \langle y,\pi^*\alpha \rangle_\textup{BM} = 0\,,$ showing that $x \in X(\A_k)^\alpha$. Since $\alpha$ was arbitrary, $x \in X(\A_k)^{\Br}$.
\end{proof}

\begin{proof}[Proof of Theorem~\ref{thm:mainthm}]
	This is an immediate consequence of Theorems~\ref{prop1} and~\ref{prop2}.
\end{proof}

\begin{proof}[Proof of Theorem~\ref{cor1}]
	By Theorem~\ref{thm:mainthm}, the Brauer-Manin obstruction to the existence of rational points on a torsor $X$ under an abelian variety $A$ is equivalent to the obstruction coming from $\Bcyr X$. By \cite[Th\'eor\`eme 6]{Manin} and \cite[Theorem I.6.13(a)]{MilneADT} there is an obstruction coming from $\Bcyr X$ if and only if $X$ is not divisible in $\Sha(k,A)$. We conclude by noting that the subgroup of divisible elements in $\Sha(k,A)$ coincides with the maximal divisible subgroup of $\Sha(k,A)$, because $\Sha(k,A)$ is torsion and each $n$-torsion subgroup is finite (cf.,  \cite[Remark I.6.7]{MilneADT}).
\end{proof}

\begin{proof}[Proof of Theorem~\ref{cor2}]
	Suppose that $X(\A_k)^B = \emptyset$ with $B \subset \Br X[n]$. Lemma~\ref{Lemma1} implies that no adelic point of $X$ lifts to an $n^p$-covering. Indeed, if $\pi\colon Y \to X$ is an $n^p$-covering with $Y(\A_k) \ne \emptyset$, then using functoriality of the Brauer pairing as in the proof of Theorem~\ref{prop1} we see that the points of $\pi(Y(\A_k))$ are orthogonal to $B$. The theorem then follows from Proposition~\ref{prop:desc}.
\end{proof}

\subsection*{Acknowledgements} The author thanks David Harari, Bianca Viray and Olivier Wittenberg for helpful comments and discussions regarding this paper. The author was partially supported by the Marsden Fund Council administered by the Royal Society of New Zealand.

\begin{bibdiv}
\begin{biblist}

\bib{AimPL}{article}{
	label={AIM}
	author={American Institute of Mathematics},
	title={AimPL: Brauer groups and obstruction problems},
	note={available at http://aimpl.org/brauermoduli (accessed 15 October 2017)}
	}

            \bib{Berkovich}{article}{
               author={Berkovi{\v{c}}, Vladimir G.},
               title={The Brauer group of abelian varieties},
               language={Russian},
               journal={Funkcional Anal. i Prilo\v zen.},
               volume={6},
               date={1972},
               number={3},
               pages={10--15},
               issn={0374-1990},
            }

\bib{CipKrash}{article}{
   author={\c Ciperiani, Mirela},
   author={Krashen, Daniel},
   title={Relative Brauer groups of genus 1 curves},
   journal={Israel J. Math.},
   volume={192},
   date={2012},
   number={2},
   pages={921--949},
   issn={0021-2172},
}

\bib{CreutzBLMS}{article}{
   author={Creutz, Brendan},
   title={Locally trivial torsors that are not Weil-Ch\^atelet divisible},
   journal={Bull. Lond. Math. Soc.},
   volume={45},
   date={2013},
   number={5},
   pages={935--942},
   issn={0024-6093},
}
		
\bib{CV17}{article}{
	label={CV17},
	author={Brendan Creutz},
	author={Bianca Viray},
	title={Degree and the Brauer-Manin obstruction (with an appendix by Alexei N. Skorobogatov)},
	note={(preprint)}
	eprint={arXiv:1703.02187v2}
	date={2017}
}

\bib{Harpaz}{article}{
	title={Second descent and rational points on Kummer varieties},
	author={Harpaz, Yonatan},
	eprint={arXiv:1703.04992},
	note={(preprint)},
	date={2017}
}

\bib{HarpazSchlank}{article}{
	title={Homotopy obstructions to rational points},
	author={Harpaz, Yonatan},
	author={Schlank, Tomer},
	note={In: Alexei Skorobogatov (Ed.), Torsors, Etale Homotopy and Applications to Rational Points, LMS Lecture Notes Series 405, Cambridge University Press, 2013, pp. 280-413}
	date={2013}
	}

\bib{Kai}{article}{
   author={Kai, Wataru},
   title={A higher-dimensional generalization of Lichtenbaum duality in
   terms of the Albanese map},
   journal={Compos. Math.},
   volume={152},
   date={2016},
   number={9},
   pages={1915--1934},
   issn={0010-437X},
}

\bib{Lichtenbaum}{article}{
   author={Lichtenbaum, Stephen},
   title={Duality theorems for curves over $p$-adic fields},
   journal={Invent. Math.},
   volume={7},
   date={1969},
   pages={120--136},
   issn={0020-9910},
}

\bib{Manin}{article}{
   author={Manin, Yuri I.},
   title={Le groupe de Brauer-Grothendieck en g\'eom\'etrie 
			diophantienne},
   conference={
      title={Actes du Congr\`es International des 
		Math\'ematiciens},
      address={Nice},
      date={1970},
   },
   book={
      publisher={Gauthier-Villars},
      place={Paris},
   },
   date={1971},
   pages={401--411},
}	

	\bib{MilneADT}{book}{
author={Milne, James S.},
title={Arithmetic Duality Theorems},
year={2006},
publisher={BookSurge, LLC},
edition={Second edition},
pages={viii+339},
isbn={1-4196-4274-X}
	}
	
			\bib{MumfordBook}{book}{
			   author={Mumford, David},
			   title={Abelian varieties},
			   series={Tata Institute of Fundamental Research Studies in Mathematics,
			   No. 5 },
			   publisher={Published for the Tata Institute of Fundamental Research,
			   Bombay; Oxford University Press, London},
			   date={1970},
			   pages={viii+242},
			}

	\bib{Serre}{article}{
	   author={Serre, Jean-Pierre},
	   title={Sur les groupes de congruence des vari\'et\'es ab\'eliennes},
	   language={French, with Russian summary},
	   journal={Izv. Akad. Nauk SSSR Ser. Mat.},
	   volume={28},
	   date={1964},
	   pages={3--20},
	   issn={0373-2436},
	}

            \bib{Skorobogatov}{book}{
               author={Skorobogatov, Alexei N.},
               title={Torsors and rational points},
               series={Cambridge Tracts in Mathematics},
               volume={144},
               publisher={Cambridge University Press, Cambridge},
               date={2001},
               pages={viii+187},
               isbn={0-521-80237-7},
            }		

\bib{SZ12}{article}{
   author={Skorobogatov, Alexei N.},
   author={Zarhin, Yuri G.},
   title={The Brauer group of Kummer surfaces and torsion of elliptic
   curves},
   journal={J. Reine Angew. Math.},
   volume={666},
   date={2012},
   pages={115--140},
   issn={0075-4102},
}

\bib{Stoll}{article}{
   author={Stoll, Michael},
   title={Finite descent obstructions and rational points on curves},
   journal={Algebra Number Theory},
   volume={1},
   date={2007},
   number={4},
   pages={349--391},
   issn={1937-0652},
}

\bib{Wang}{article}{
   author={Wang, Lan},
   title={Brauer-Manin obstruction to weak approximation on abelian
   varieties},
   journal={Israel J. Math.},
   volume={94},
   date={1996},
   pages={189--200},
   issn={0021-2172},
}

\end{biblist}
\end{bibdiv}

\end{document}